\smartqed \usepackage{amsmath, amssymb, latexsym,dsfont}
\newtheorem{thr}{Theorem}[section]
\newtheorem{co}[thr]{Corollary}
\newtheorem{lm}[thr]{Lemma}
\newtheorem{pr}[thr]{Proposition}
\newtheorem{ex}[thr]{Example}
\newtheorem{defn}[thr]{Definition}
\newtheorem{rem}[thr]{Remark}
\begin{document}
\title{Dual and Generalized Dual Cones in Banach Spaces}
\titlerunning{Dual and Generalized Dual Cones in Banach Spaces}        % if too long for running head
\author{Akhtar A. Khan, Dezhou Kong, Jinlu Li}
\institute{ Akhtar Khan (Corresponding author)  \at
              School of Mathematical Sciences, Rochester Institute of Technology, Rochester, New York, 14623, USA. \email{aaksma@rit.edu}
             \and
              Dezhou Kong \at
              College of Information Science and Engineering, Shandong Agricultural University, Taian, Shandong,  China.  \email{dezhoukong@163.com}
              \and 
             Jinlu Li \at
              Department of Mathematics, Shawnee State University, Portsmouth, Ohio 45662, USA.  \email{jli@shawnee.edu)}
{\large}}
\date{Received: date / Accepted: date}

\maketitle

\begin{abstract}
This paper proposes and analyzes the notion of dual cones associated with the metric projection and generalized projection in Banach spaces. We show that the dual cones, related to the metric projection and generalized metric projection, lose many important properties in transitioning from Hilbert spaces to Banach spaces. We also propose and analyze the notions of faces and visions in Banach spaces and relate them to metric projection and generalized projection. We provide many illustrative examples to give insight into the given results.
\keywords{Generalized projection \and  metric projection \and  dual cones \and  faces and visions in Banach spaces.}
\subclass{41A10, 41A50, 47A05, 58C06.}
\end{abstract}
\section{Introduction}\label{KL4-S1-I}
Dual cones, induced by the metric projections, have a simple structure and valuable properties in the setting of Hilbert spaces. The derivations of such properties heavily exploit the underlying Hilbertian structure. The Hilbertian structure also equips the metric projection with attractive features, see Zarantonello~\cite[Lemma~1.5]{Zar71}. However, during the last three decades, many important studies of metric projection have been conducted in Banach spaces. This development is partly motivated by the real-world applications of metric projection in optimization, approximation theory, inverse problems, variational inequalities, image processing, neural networks, machine learning, and others. For an overview of these details and some of the related developments, see \cite{BalGol12,BalMarTei21,Bau03,BorDruChe17,Bou15,Bro13,BroDeu72,Bui02,Bur21,CheGol59,ChiLi05,Den01,DeuLam80,DutShuTho17,GJKS21,Ind14,FitPhe82,KonLiuLiWu22,KroPin13,Li04,Li04a,LiZhaMa08,Nak22,Osh70,Pen05,PenRat98,QiuWan22,Ric16,Sha16,ShaZha17,ZhaZhoLiu19},  and the cited references.

The primary objective of this research is to propose and analyze the notion of dual cones associated with the metric projection in Banach spaces. We note that the shortcomings of the metric projection in Banach spaces have resulted in important extensions, namely, the generalized projection and the generalized metric projection, which enjoy better properties in a Banach space framework, see \cite{Alb93,Alb96,KhaLiRwi22,Li04a,Li05}. We show that the dual cones, related to the metric projection and generalized metric projection, lose many properties in transitioning from Hilbert spaces to Banach spaces. We also propose and analyze the notions of faces and visions in Banach spaces and relate them to the metric projection and generalized projection. Illustrative examples are given.

The contents of this paper are organized into five sections. After a brief introduction in Section~1, we recall some background material Section 2. Section 3 studies dual cones related to the metric projection where as the dual cones related to the generalized projection are studied in Section 4. various notions of projections and give new results concerning normalized duality mapping. Section 5 studies the faces and visions in Banach spaces. 
\section{Preliminaries}\label{KL4-S2-P}
Let $X$ be a real Banach space with norm $\|\cdot\|_X$, let $X^*$ be the topological dual of $X$ with norm $\|\cdot\|_{X^*}$, and let $\langle \cdot,\cdot\rangle$ be the duality pairing between $X^*$ and $X$. We will denote the null elements in $X$ and $X^*$ by $\theta$ and $\theta^*.$ Moreover, the closed and convex hull of a set $M\in X$ is denoted by $\overline{\text{co(M)}}.$ Given a Banach space $X$, for $r>0$, we denote the closed ball, open ball and sphere with radius $r$ and center $\theta$ by
\begin{align*}
B(r)&=\{x\in X:\ \|x\|_X\leq r\},\\
B^0(r)&=\{x\in X:\ \|x\|_X<r\},\\
S(r)&=\{x\in X:\ \|x\|_X=r\}.
\end{align*}
For details on the notions recalled in this section, see \cite{Tak00}.

Given a uniformly convex and uniformly smooth Banach space $X$ with dual space $X^*$, the normalized duality map $J:X\to X^*$ is a single-valued mapping defined by
$$\langle Jx,x\rangle=\|Jx\|_{\text{\tiny{\emph{X}}}^*}\|x\|_{\text{\tiny{\emph{X}}}}=\|x\|_{\text{\tiny{\emph{X}}}}^2=\|Jx\|_{\text{\tiny{\emph{X}}}^*}^2,\quad \text{for any}\ x\in X.$$

In a uniformly convex and uniformly smooth Banach space $X$, the normalized map $J_X:X\to X^*$  is one-to-one, onto, continuous and homogeneous. Furthermore, the normalized duality mapping $J^*:X^*\to X$ is the inverse of $J$, that is $J^*J=I_X$ and $JJ^*=I_{X^*}$, where $I_X$ and $I_X^*$ are the identity maps in $X$ and $X^*$. On the other hand, in a general Banach space $X$ with dual $X^*$, the normalized duality mapping $J:X\to 2^{X^*}$ is a set-valued mapping with nonempty valued. In particular, if $X^*$ is strictly convex, then $J:X\to X^*$ is a single-valued mapping. See \cite{Tak00}.

The following example will be repeatedly used in this work.
\begin{ex}\label{KL4-Ex2.1} Let $X=\mathds{R}^3$ be equipped with the $3$-norm $\|\cdot\|_3$ defined for any $z=(z_1,z_2,z_3)\in X,$ by
$$\|z\|_3=\sqrt[3]{|z_1|^3+|z_2|^3+|z_3|^3}.$$ Then $(X,\|\cdot\|_3)$ is a uniformly convex and uniformly smooth Banach space (and is not a Hilbert space). The dual space of $(X,\|\cdot\|_3)$ is $(X^*,\|\cdot\|_{\frac{3}{2}})$ so  that for any $\psi=(\psi_1,\psi_2,\psi_2)$, we have
$$\|\psi\|_{\frac32}=\left(|\psi_1|^{\frac{3}{2}}+|\psi_2|^{\frac{3}{2}}+|\psi_3|^{\frac{3}{2}}\right)^{\frac23}.$$

The normalized duality mappings $J$ and $J^*$ satisfy the following conditions. For any $z=(z_1,z_2,z_3)\in X$ with $z\ne \theta$, we have
\begin{equation}\label{KL3-S2.2-Ex2.2-E2.1}
Jz=\left(\frac{|z_1|^2\text{sign}(z_1)}{\|z\|_3},\frac{|z_2|^2\text{sign}(z_2)}{\|z\|_3},\frac{|z_3|^2\text{sign}(z_3)}{\|z\|_3}\right).
\end{equation}
Moreover, for any $\psi=(\psi_1,\psi_2,\psi_3)\in X^*$ with $\psi\ne \theta$, we have
\begin{equation}\label{KL3-S2.2-Ex2.2-E2.2}
J^*\psi =\left( \frac{|\psi_1|^{\frac32 -1}\text{sign}(\psi_1)}{\left(\|\psi\|_{\frac32}\right)^{\frac32-2}}, \frac{|\psi_2|^{\frac32 -1}\text{sign}(\psi_2)}{\left(\|\psi\|_{\frac32}\right)^{\frac32-2}},\frac{|\psi_3|^{\frac32 -1}\text{sign}(\psi_3)}{\left(\|\psi\|_{\frac32}\right)^{\frac32-2}}\right).
\end{equation}
\end{ex}

Let $X$ be a uniformly convex and uniformly smooth Banach space and let $C$ be a nonempty, closed, and convex subset of $X$. We define a Lyapunov function $V:X^*\times X\to \mathds{R}$ by the formula:
$$V(\psi,x)=\|\psi\|^2_{X^*}-2\langle \psi,x\rangle+\|x\|_X^2,\quad \text{for any}\ \psi\in X^*,\ x\in X.$$

We shall now recall useful notions of projections in Banach spaces.
\begin{defn}
Let $X$ be a uniformly convex and uniformly smooth Banach space, let $X^*$ be the dual of $X$, and let $C$ be a nonempty, closed, and convex subset of $X$.
\begin{description}
\item[1.] The metric projection $P_C:X\to C$ is a single-valued defined by
$$\|x-P_Cx\|_X\leq \|x-z\|_X,\quad \text{for all}\ z\in C.$$
\item[2.] The generalized projection $\pi_C:X^*\to C$ is a single-value map that satisfies
\begin{equation}\label{KL3-S2.5-GMP-E3.3}
V(\psi,\pi_C\psi)=\inf_{y\in C}V(\psi,y),\quad \text{for any}\ \psi\in X^*.
\end{equation}
\end{description}
\end{defn}

The following result collects some of the basic properties of the metric projection defined above.
\begin{pr}\label{KL3-S2.3-NP-P2.6}  Let $X$ be a uniformly convex and uniformly smooth Banach space and let $C$ be a nonempty, closed, and convex subset of $X$.
\begin{description}
\item[1.] The metric projection $P_C:X\to C$ is a continuous map that enjoys the following variational characterization:
\begin{equation}\label{KL3-S2.3-NP-P2.6-E1}
u=P_C(x)\quad \Leftrightarrow \quad \langle J_X(x-u),u-z\rangle\geq 0,\quad \text{for all}\ z\in C.
\end{equation}
\item[2.] The generalized projection $\pi_C:X^*\to C$ enjoys the following variational characterization: For any $\psi\in X^*$ and $y\in C,$
\begin{equation}\label{KL3-S2.4-GP-E2.5}
y=\pi_C(\psi),\quad \text{if and only if},\quad \langle \psi-J_Xy,y-z\rangle\geq 0,\quad \text{for all}\ z\in C.
\end{equation}
\end{description}
\end{pr}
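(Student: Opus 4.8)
The plan is to read both variational characterizations as the first-order optimality conditions for the convex minimization problems defining $P_C$ and $\pi_C$, and to deduce continuity of $P_C$ from uniform convexity. The whole argument rests on two standard consequences of $X$ being uniformly smooth, which I would simply cite from \cite{Tak00}: the functional $w\mapsto\|w\|_X^2$ is Gâteaux differentiable with derivative $2J_Xw$, so that
$$\lim_{t\to 0^+}\frac{\|w-tv\|_X^2-\|w\|_X^2}{t}=-2\langle J_Xw,v\rangle\qquad(w,v\in X),$$
and, since $\|\cdot\|_X^2$ is convex, the subgradient inequality $\|b\|_X^2-\|a\|_X^2\ge 2\langle J_Xa,b-a\rangle$ holds for all $a,b\in X$.

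For the first part, note that $u=P_C(x)$ is equivalent to $u$ minimizing $z\mapsto\|x-z\|_X^2$ over $C$. For the forward direction, fix $z\in C$; convexity of $C$ gives $u+t(z-u)\in C$ for $t\in(0,1]$, hence $\|x-u\|_X^2\le\|(x-u)-t(z-u)\|_X^2$, and dividing by $t$ and letting $t\to 0^+$ with the displayed limit yields $-2\langle J_X(x-u),z-u\rangle\ge 0$, i.e. $\langle J_X(x-u),u-z\rangle\ge 0$. Conversely, if $\langle J_X(x-u),u-z\rangle\ge 0$ for every $z\in C$, the subgradient inequality with $a=x-u$ and $b=x-z$ gives $\|x-z\|_X^2-\|x-u\|_X^2\ge 2\langle J_X(x-u),u-z\rangle\ge 0$, so $u=P_C(x)$. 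The second part is entirely parallel: $y=\pi_C(\psi)$ is equivalent to $y$ minimizing $z\mapsto V(\psi,z)=\|\psi\|_{X^*}^2-2\langle\psi,z\rangle+\|z\|_X^2$ over $C$; differentiating $t\mapsto V(\psi,y+t(z-y))$ at $t=0^+$ and using the limit above gives $\langle J_Xy-\psi,z-y\rangle\ge 0$, while the subgradient inequality for $\|\cdot\|_X^2$ supplies the converse, and both rearrange to $\langle\psi-J_Xy,y-z\rangle\ge 0$.

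It remains to prove that $P_C$ is continuous. Let $x_n\to x$, put $u_n=P_C(x_n)$, $u=P_C(x)$, and write $d(z,C)=\inf_{c\in C}\|z-c\|_X$. Since $u\in C$, $\|x_n-u_n\|_X\le\|x_n-u\|_X$, so $(u_n)$ is bounded; as $z\mapsto d(z,C)$ is $1$-Lipschitz, $\|x_n-u_n\|_X=d(x_n,C)\to d(x,C)=:d$, and then $\|x-u_n\|_X\le\|x-x_n\|_X+\|x_n-u_n\|_X\to d$ together with $\|x-u_n\|_X\ge d$ forces $\|x-u_n\|_X\to d$. If $d=0$ then $x\in C$, $u=x$, and $\|x-u_n\|_X\to 0$, so $u_n\to u$; assume now $d>0$. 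Since $\tfrac12(u+u_n)\in C$ we have $\|x-\tfrac12(u+u_n)\|_X\ge d$, whereas $\|x-\tfrac12(u+u_n)\|_X\le\tfrac12\bigl(\|x-u\|_X+\|x-u_n\|_X\bigr)\to d$; hence $\bigl\|\tfrac12\bigl((x-u)+(x-u_n)\bigr)\bigr\|_X\to d$ with $\|x-u\|_X=d$ and $\|x-u_n\|_X\to d$. Applying uniform convexity of $X$ to the (eventually) unit vectors $(x-u)/\|x-u\|_X$ and $(x-u_n)/\|x-u_n\|_X$ then yields $\|u-u_n\|_X\to 0$, i.e. $P_C(x_n)\to P_C(x)$.

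The two limit/convexity computations are routine; the step that genuinely uses the Banach-space hypotheses beyond formal manipulation is the continuity argument, where uniform convexity is indispensable — in a space that is only strictly convex the metric projection onto a closed convex set need not be continuous — so that is the part I would treat most carefully.
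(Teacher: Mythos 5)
Your argument is correct, but there is nothing in the paper to compare it against: Proposition~\ref{KL3-S2.3-NP-P2.6} is stated in the preliminaries as recalled background, with the paper simply pointing to \cite{Tak00} (the characterizations \eqref{KL3-S2.3-NP-P2.6-E1} and \eqref{KL3-S2.4-GP-E2.5} are the classical results of Alber on the metric and generalized projections), so no proof is given there. What you supply is the standard self-contained derivation: reading $u=P_C(x)$ and $y=\pi_C(\psi)$ as minimizers of the convex functionals $z\mapsto\|x-z\|_X^2$ and $z\mapsto V(\psi,z)$, the forward implications come from the one-sided directional derivative of $\|\cdot\|_X^2$ (which equals $2J_X$ by uniform smoothness) along admissible directions $z-u$, and the converses from the subdifferential inequality $\|b\|_X^2-\|a\|_X^2\geq 2\langle J_Xa,b-a\rangle$; both computations are right, and the rearrangements land exactly on \eqref{KL3-S2.3-NP-P2.6-E1} and \eqref{KL3-S2.4-GP-E2.5}. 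Your continuity proof is also sound: the $1$-Lipschitz distance function gives $\|x-u_n\|_X\to d$, the midpoint estimate gives $\bigl\|\tfrac12\bigl((x-u)+(x-u_n)\bigr)\bigr\|_X\to d$, and the normalization step you compress at the end does go through because $x-u_n$ stays bounded and $\|x-u_n\|_X\to d>0$, so the midpoint of the normalized vectors has norm tending to $1$ and uniform convexity forces $u_n\to u$. The only tacit point is that existence and single-valuedness of $P_C$ and $\pi_C$ are taken as given, which is consistent with how the paper's Definition of these maps is phrased, so this is not a gap in context.
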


We will also need the following notions.  Given a Banach space $X$, for any  $u,v\in X$ with $u\ne v$, we write
\begin{description}
\item[($a$)] $[v,u]=\{tv+(1-t)u:\ 0\leq t\leq 1\}.$
\item[($b$)] $[v,u\lceil=\{tv+(1-t)u:\ 0\leq t<\infty\}.$
\item[($c$)] $\rceil u,v \lceil=\{tv+(1-t)u:\ -\infty < t <\infty \}.$
\end{description}
The set $[v,u]$ is a closed segment  with end points $u$ and $v$. The set $[v,u\lceil$ is a closed ray in $X$ with end point $v$ with direction $u-v$, which is a closed convex cone with vertex at $v$ and is a special class of cones in $X$.  The set $\rceil u,v \lceil$ is a line in $X$ passing through points $v$ and $u$.

We conclude this section by recalling the following result (see \cite{KhaLi23}):
\begin{thr}\label{T3.1Ten}  Let $X$ be a uniformly convex and uniformly smooth Banach space and let $C$ a nonempty, closed, and convex subset of $X$. For any $y\in C,$ let $x\in X\backslash C$ be such that $y=P_C{x}$. We define the inverse image of $y$ under the metric projection  $P_C:X\to C$ by
$$P_{C}^{-1}(y)=\{u\in X:\ P_{C}(u)=y\}.$$
Then $P_{C}^{-1}(y)$ is a closed cone with vertex at $y$ in $X$. However, $P_{C}^{-1}(y)$ is not convex, in general.
\end{thr}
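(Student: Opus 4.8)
The plan is to reduce every assertion to the variational characterization \eqref{KL3-S2.3-NP-P2.6-E1} of the metric projection. Renaming the variables in \eqref{KL3-S2.3-NP-P2.6-E1} shows that, for any $u\in X$, one has $u\in P_C^{-1}(y)$ if and only if $\langle J_X(u-y),\,y-z\rangle\ge 0$ for all $z\in C$. Closedness of $P_C^{-1}(y)$ is then immediate from the continuity of $P_C$ asserted in Proposition~\ref{KL3-S2.3-NP-P2.6}, since $P_C^{-1}(y)$ is the preimage of the closed singleton $\{y\}$ under the continuous map $P_C$. For the cone property I would fix $u\in P_C^{-1}(y)$ and $t\ge 0$ and put $u_t=y+t(u-y)$; then $u_t-y=t(u-y)$, so the homogeneity of $J_X$ gives $J_X(u_t-y)=t\,J_X(u-y)$, whence $\langle J_X(u_t-y),\,y-z\rangle=t\,\langle J_X(u-y),\,y-z\rangle\ge 0$ for every $z\in C$, and the equivalence above yields $u_t\in P_C^{-1}(y)$ (the case $t=0$ being the trivial identity $P_C(y)=y$). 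Thus $P_C^{-1}(y)$ contains, together with any of its points $u$, the whole closed ray from $y$ through $u$, i.e.\ it is a closed cone with vertex at $y$.

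For the failure of convexity I would exhibit an explicit example in the space $(\mathds{R}^3,\|\cdot\|_3)$ of Example~\ref{KL4-Ex2.1}. Take $y=\theta$ and $C=\{t(1,1,1):0\le t\le 1\}$, which is nonempty, closed and convex. By the equivalence above with $y=\theta$, a vector $u=(u_1,u_2,u_3)$ belongs to $P_C^{-1}(\theta)$ exactly when $\langle J_Xu,\,z\rangle\le 0$ for all $z\in C$, and since $C$ is the segment in the direction $(1,1,1)$ this reduces, through the formula \eqref{KL3-S2.2-Ex2.2-E2.1} for $J_X$, to the single scalar inequality $u_1|u_1|+u_2|u_2|+u_3|u_3|\le 0$. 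In particular $x=(-1,-1,-1)$ satisfies this inequality and lies in $X\setminus C$, so $\theta=P_C(x)$ and the standing hypothesis of the theorem holds. To see that $P_C^{-1}(\theta)$ is not convex, observe that $A=(1,1,-\sqrt{2})$ and $B=(-\sqrt{2},1,1)$ each satisfy the inequality with equality, hence lie in $P_C^{-1}(\theta)$, while a direct computation shows that their midpoint $\frac12(A+B)=\bigl(\frac{1-\sqrt{2}}{2},\,1,\,\frac{1-\sqrt{2}}{2}\bigr)$ gives the value $\frac{2\sqrt{2}-1}{2}>0$ and therefore lies outside $P_C^{-1}(\theta)$.

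I expect the closedness and cone assertions to be routine; the delicate part is the counterexample. Conceptually, $P_C^{-1}(y)=y+J_X^{-1}\bigl(N_C(y)\bigr)$, where $N_C(y)$ is the convex normal cone of $C$ at $y$ in $X^*$, so the failure of convexity is precisely a manifestation of the nonlinearity of $J_X^{-1}$; this nonlinearity is invisible on cones aligned with the coordinate directions and, for a segment, also in two dimensions (there the fibre is still a half-plane), so one is forced to pass to $\mathds{R}^3$ and to pick a segment whose direction has all of its coordinates nonzero. Once that configuration is fixed, checking the claim is only the elementary verification indicated above.
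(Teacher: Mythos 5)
Your proposal is correct: the variational characterization argument for the cone and closedness properties checks out, and the counterexample computations in $(\mathds{R}^3,\|\cdot\|_3)$ (both $A=(1,1,-\sqrt2)$ and $B=(-\sqrt2,1,1)$ give $u_1|u_1|+u_2|u_2|+u_3|u_3|=0$, while the midpoint gives $\tfrac{2\sqrt2-1}{2}>0$) are accurate. The paper itself only recalls this theorem from the cited reference without proof, but your route is essentially the same as the argument the paper gives for the analogous Theorem~\ref{KKL4-T3.2}: homogeneity of $J$ for the cone property, continuity for closedness, and an explicit failure of convexity for the fiber over the vertex of a one-dimensional convex set in $(\mathds{R}^3,\|\cdot\|_3)$, differing only in the particular points chosen.
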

\section{Dual Cones for the Metric Projection}\label{KL4-S-DCMP}
A cone $K$ in a vector space is said to pointed if $K$ has vertex at $\theta$, $K\cap (-K)=\{\theta\}$, and $K\ne \theta.$

Let $H$ be a Hilbert space, and let $K$ be a cone in $H$ with vertex at $v$. We define the dual cone of $K$ in $H$ with respect to the metric projection $P_K$ by
$$K^{\perp}=\{x\in H:\ \langle x-v,v-z\rangle\geq 0,\ \text{for all}\ z\in K \}.$$

The dual cone has the following properties in Hilbert spaces (see Zarantonello~\cite{Zar71}):
\begin{description}
\item[(1)] $K^{\perp}$ is a closed and convex cone in $H$ with vertex at $v$.
\item[(2)] $K^{\perp\perp}=\overline{\text{co}K}$.
\item[(3)] If $K$ is a closed and convex cone, then $K^{\perp}$ and $K$ are dual cones of each other.
\item[(4)] If $K$ is a closed, convex and pointed cone, then $P_K$ is positive homogeneous and
$$\langle x,P_Kx\rangle=\|P_Kx\|^2,\ \text{for all}\ x\in H.$$
\end{description}

In this following, we extend the concept of a dual cone from Hilbert spaces to uniformly convex and uniformly smooth Banach spaces and derive their valuable properties. We will show that the properties (3) and (4) given above do not hold, in general, in Banach spaces.

\begin{defn}\label{KKL4-D3.1} Let $X$ be a Banach space, let the dual $X^*$ of $X$ be strictly convex, and let $K$ be a cone in $X$ with vertex $v$. We define the dual cone with respect to the metric projection $P$ by
\begin{equation}\label{KKL4-E3.1}
K_P^{\perp}=\{x\in X:\ \langle J(x-v),v-z\rangle\geq 0,\quad \text{for all}\ z\in K\}.
\end{equation}

The following result shows that $K_P^{\perp}$ is a cone in $X$, and $K_P^{\perp}$ and $K$ have the same vertex $v$.
\end{defn}
\begin{thr}\label{KKL4-T3.2} Let $X$ be a Banach space, let the dual $X^*$ of $X$ be strictly convex, and let $K$ be a cone in $X$ with vertex $v$. Then, the following statements hold:
\begin{description}
\item[($a$)] $K_P^{\perp}$ is a cone with vertex at $\theta$ in $X$.
\item[($b$)] If $X$ is uniformly convex and uniformly smooth, then $K_P^{\perp}$ is closed.
\item[($c$)] If $X$ is uniformly convex and uniformly smooth and $K$ is closed and convex, then $K_P^{\perp}=P_K^{-1}(v).$
\item[($d$)] $K_P^{\perp}$ is not convex.
\item[($e$)] $K\not\subseteq (K_{P}^{\perp})_P^{\perp}.$
\item[($f$)] $K$ and $K_P^{\perp}$ are not dual of each other.
\end{description}
\end{thr}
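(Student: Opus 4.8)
The plan is to obtain the ``positive'' assertions (a)--(c) directly from the definitions and from the variational characterization~\eqref{KL3-S2.3-NP-P2.6-E1} of $P_K$, and to obtain the ``negative'' assertions (d)--(f) from a single explicit example in $(\mathds{R}^3,\|\cdot\|_3)$ taken from Example~\ref{KL4-Ex2.1}.

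For~(a): first, $v\in K_P^{\perp}$ because $J(v-v)=J\theta=\theta^*$, whence $\langle J(v-v),v-z\rangle=0\ge 0$ for all $z\in K$; then, for $x\in K_P^{\perp}$ and $t\ge 0$, homogeneity of the normalized duality map gives $J\bigl(t(x-v)\bigr)=t\,J(x-v)$, so $\langle J\bigl((v+t(x-v))-v\bigr),\,v-z\rangle=t\,\langle J(x-v),v-z\rangle\ge 0$ for all $z\in K$, that is, $v+t(x-v)\in K_P^{\perp}$; hence $K_P^{\perp}$ is a cone sharing the vertex $v$ of $K$. For~(b): $K_P^{\perp}=\bigcap_{z\in K}\{x\in X:\ \langle J(x-v),v-z\rangle\ge 0\}$, and since $J$ is continuous when $X$ is uniformly convex and uniformly smooth, for each fixed $z$ the set $\{x\in X:\ \langle J(x-v),v-z\rangle\ge 0\}$ is closed; an intersection of closed sets is closed. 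For~(c): $v\in K$ and $K$ nonempty, closed and convex make $P_K$ well defined, and~\eqref{KL3-S2.3-NP-P2.6-E1} with $u=v$ says that $v=P_K(x)$ iff $\langle J(x-v),v-z\rangle\ge 0$ for all $z\in K$; the right-hand side is exactly $x\in K_P^{\perp}$, so $K_P^{\perp}=P_K^{-1}(v)$.

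For~(d)--(f): take $X=(\mathds{R}^3,\|\cdot\|_3)$, set $u=(1,1,1)$, and let $K=\{tu:\ t\ge 0\}$, a pointed closed convex cone with vertex $v=\theta$. Writing $g(a):=|a|^2\,\text{sign}(a)$ and using~\eqref{KL3-S2.2-Ex2.2-E2.1}, the inequality $\langle Jx,u\rangle\le 0$ reduces $K_P^{\perp}$ to $\{x\in X:\ g(x_1)+g(x_2)+g(x_3)\le 0\}$, a cone with vertex $\theta$. For~(d): $(1,-1,0)$ and $(1,0,-1)$ both lie in $K_P^{\perp}$ (the sum $g(x_1)+g(x_2)+g(x_3)$ equals $0$), but their midpoint $(1,-\tfrac12,-\tfrac12)$ gives $g(1)+2g(-\tfrac12)=1-\tfrac12=\tfrac12>0$, so $K_P^{\perp}$ is not convex. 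For~(e) and~(f): as $K_P^{\perp}$ has vertex $\theta$, $(K_P^{\perp})_P^{\perp}=\{x\in X:\ \langle Jx,z\rangle\le 0\ \text{for all}\ z\in K_P^{\perp}\}$; taking $z_0=(-\sqrt2,1,1)\in K_P^{\perp}$ (here $g(-\sqrt2)+g(1)+g(1)=-2+2=0$) and $u=(1,1,1)\in K$, we get $\langle Ju,z_0\rangle=\bigl(2-\sqrt2\bigr)/3^{1/3}>0$, so $u\notin(K_P^{\perp})_P^{\perp}$; hence $K\not\subseteq(K_P^{\perp})_P^{\perp}$, and since mutual duality would force $(K_P^{\perp})_P^{\perp}=K$, the cones $K$ and $K_P^{\perp}$ are not dual of each other. (Alternatively, (d) is an instance of the non-convexity recorded in Theorem~\ref{T3.1Ten} via~(c), the example making it concrete.)

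Parts~(a)--(c) are bookkeeping once one notices that membership in $K_P^{\perp}$ is literally the variational inequality for $v=P_K(x)$; the substance of the theorem is in~(d)--(f). The main obstacle there is to choose $K$ so that $K_P^{\perp}$ retains a transparent closed form in spite of the nonlinearity of $J$, and then to exhibit explicit witnesses --- two points of $K_P^{\perp}$ whose midpoint escapes $K_P^{\perp}$ for~(d), and a point of $K$ together with a separating element of $K_P^{\perp}$ for~(e)--(f); the coordinate identities above are arranged to make every verification immediate. A small point to watch is that $(K_P^{\perp})_P^{\perp}$ must be formed with respect to the vertex of $K_P^{\perp}$, which in the example is $\theta$ since $v=\theta$.
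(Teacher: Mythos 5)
Your proof is correct and takes essentially the same route as the paper: parts (a)--(c) via homogeneity of $J$, continuity of $J$, and the variational characterization of $P_K$, and parts (d)--(f) via an explicit counterexample with a ray in $(\mathds{R}^3,\|\cdot\|_3)$. The only difference is cosmetic --- you use the ray generated by $(1,1,1)$ together with a closed-form description of $K_P^{\perp}$, whereas the paper uses the ray generated by $(-25,-37,-77)$ with the witnesses $(3,-2,-1)$ and $(1,-3,2)$; your numerical verifications all check out.
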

\begin{proof}
($a$) For an arbitrary $x\in K_P^{\perp}$ with $x\ne v$, and for any $t>0$, by the homogeneity property of the normalized duality mapping $J$, we have
$$\langle J(v+t(x-v)-v),v-z\rangle =\langle J(t(x-v)),v-z\rangle=t\langle J(x-v),v-z\rangle\geq 0,\quad \text{for all}\ z\in K, $$
implying that $v+t(x-v)\in K_P^{\perp},$ for all $t>0.$ Thus, $K_P^{\perp}$ is a cone in $X$ with vertex at $v$.

($b$) Under the additional hypothesis on $X$, $J$ is continuous, which proves that $K_P^{\perp}$ is closed.

($c$) By the basic variational principle of $P_K$, for any given $x\in X,$ we have
\begin{equation}\label{KKL4-E3.2}
P_K(x)=v\quad \Leftrightarrow\quad \langle J(x-v),v-z\rangle\geq 0,\quad \text{for all}\ z\in K.
\end{equation}
Since \eqref{KKL4-E3.2} coincides with \eqref{KKL4-E3.1}, we deduce that $K_P^{\perp}=P_K^{-1}(v).$

(d) We construct a counterexample to show that $K_{P}^{\perp}$ is not convex. Take $X=\mathds{R}_3$ given in Example~\ref{KL4-Ex2.1}. Let $u=(-25.-37,-77)$, and
$K=[\theta,u \lceil =\{\alpha u:\ 0\leq \alpha <\infty\}.$ Take $x=(3,-2,-1)$ and $y=(1,-3,2).$ Then $\|x\|_3=\|y\|_3=\sqrt[3]{36}.$ By using (2.1), we have
$$J(x-\theta)=\left(\frac{9}{\|x\|_3},\frac{-4}{\|x\|_3},\frac{-1}{\|x\|_3}\right)=\left(\frac{9}{\sqrt[3]{36}}, \frac{-4}{\sqrt[3]{36}}, \frac{-1}{\sqrt[3]{36}}\right).$$
Next, we compute
%\begin{equation}\label{KKL4-E3.3}
$$\langle J(x-\theta),\theta -\alpha u\rangle=\left\langle \left(\frac{9}{\sqrt[3]{36}}, \frac{-4}{\sqrt[3]{36}}, \frac{-1}{\sqrt[3]{36}}\right),-\alpha (-25,-37,-77) \right\rangle=0,\ \text{for all}\ \alpha u\in K,\ \alpha \in [0,\infty),$$
which implies that $x\in K_P^{\perp}$. Analogously, $J(y-\theta)=\left(\frac{1}{\sqrt[3]{36}}, \frac{-9}{\sqrt[3]{36}}, \frac{4}{\sqrt[3]{36}}\right),$ and hence
$$\langle J(y-\theta),\theta -\alpha u\rangle=\left\langle \left(\frac{1}{\sqrt[3]{36}}, \frac{-9}{\sqrt[3]{36}}, \frac{4}{\sqrt[3]{36}}\right),-\alpha (-25,-37,-77) \right\rangle=0,\ \text{for all}\ \alpha u\in K,\ \alpha \in [0,\infty),$$
which proves that $y\in K_P^{\perp}.$ For $h=\frac23x+\frac13 y$, we have $h-\theta=h=\left(\frac73,-\frac73,0\right)$, yielding
$$J(h-\theta)=\frac{7\sqrt[3]{4}}{6}(1,-1,0).$$
We now compute
$$\langle J(h-\theta),\theta -\alpha u\rangle=-14\sqrt[3]{4}\alpha <0,\quad \text{for every}\ \alpha u\in K,\ \alpha\in (0,\infty),$$
which proves that $h\ne K_P^{\perp}$. Thus, $K_P^{\perp}$ is not convex.

($e$) Since $K_P^{\perp}$ is a closed cone with vertex at $\theta$, $(K_{P}^{\perp})_P^{\perp}$ is a closed cone with vertex at $\theta$.  We will use the counterexample from (d).  Recall that $u=(-25,-37,-77)$ and $K=[\theta,u\lceil$. We showed that $x=(3,-2,-1)\in K_{P}^{\perp}$. Then,
$$\langle J(\alpha u-\theta),\theta -x\rangle=\alpha \langle Ju,-x\rangle<0,  $$
which implies that $\alpha u\notin(K_{P}^{\perp})_P^{\perp} $, for any $\alpha u\in K$ with $\alpha \in (0,\infty)$, which prove ($e$). Finally, ($f$) follows from (e).
\end{proof} %We define%$$P_K^{-1}(v)=\{x\in X:\ P_Kx=v\}.$$
\begin{pr}\label{KKL4-P3.3} Let $X$ be a uniformly convex and uniformly smooth Banach space and let $K$ be a closed, convex, and pointed cone in $X$. Then $P_K$ is positive homogeneous. In general,
\begin{equation}\label{KKL4-E3.4}
\langle Jw,P_Kw\rangle\ne \|P_Kw\|_X^2,\quad \text{for}\ w\in X.
\end{equation}
\end{pr}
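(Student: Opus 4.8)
The plan is to treat the two claims of the proposition separately. Positive homogeneity of $P_K$ will follow quickly from the variational characterization \eqref{KL3-S2.3-NP-P2.6-E1} of the metric projection together with the homogeneity of the normalized duality map $J$, once one observes that a pointed cone has vertex $\theta$. The failure of the Hilbert-space identity $\langle Jw,P_Kw\rangle=\|P_Kw\|_X^2$ will be shown by producing an explicit $w$ in the space of Example~\ref{KL4-Ex2.1} for which the two quantities differ.

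\textbf{Positive homogeneity.} Fix $w\in X$, set $u=P_Kw$, and let $\lambda>0$. Since $K$ is pointed, its vertex is $\theta$, so $z\in K$ if and only if $\lambda^{-1}z\in K$. By \eqref{KL3-S2.3-NP-P2.6-E1}, $\langle J(w-u),u-y\rangle\ge0$ for every $y\in K$. Hence, for every $z\in K$, using homogeneity of $J$,
\[
\langle J(\lambda w-\lambda u),\,\lambda u-z\rangle=\lambda^2\big\langle J(w-u),\,u-\lambda^{-1}z\big\rangle\ge0 ,
\]
because $\lambda^{-1}z\in K$. Since $P_K$ is single-valued, \eqref{KL3-S2.3-NP-P2.6-E1} forces $P_K(\lambda w)=\lambda u=\lambda P_Kw$, which is exactly positive homogeneity.

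\textbf{Counterexample for \eqref{KKL4-E3.4}.} Take $X=\mathds{R}^3$ with the $3$-norm of Example~\ref{KL4-Ex2.1} and let $K=\{(\alpha,0,0):\alpha\ge0\}$, which is a closed, convex, and pointed cone with vertex $\theta$. Put $w=(1,1,0)$. Minimizing $\alpha\mapsto\|w-(\alpha,0,0)\|_3^{\,3}=|1-\alpha|^3+1$ over $\alpha\ge0$ gives the unique minimizer $\alpha=1$, so $P_Kw=(1,0,0)$ and $\|P_Kw\|_3^2=1$; alternatively one verifies \eqref{KL3-S2.3-NP-P2.6-E1} directly, since $J(w-(1,0,0))=J(0,1,0)=(0,1,0)$ and $\langle(0,1,0),(1,0,0)-(\beta,0,0)\rangle=0$ for all $\beta\ge0$. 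On the other hand, formula \eqref{KL3-S2.2-Ex2.2-E2.1} yields $Jw=\tfrac{1}{\sqrt[3]{2}}(1,1,0)$, whence $\langle Jw,P_Kw\rangle=\tfrac{1}{\sqrt[3]{2}}\ne1=\|P_Kw\|_3^2$, establishing \eqref{KKL4-E3.4}.

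The computations here are all elementary, so there is no serious obstacle; the only point deserving care is the rigorous justification that $P_Kw=(1,0,0)$, which I would handle either by the one-dimensional minimization above or by the variational-inequality check. It is worth emphasizing in the write-up the conceptual reason for the asymmetry between the two assertions: positive homogeneity uses only that $J$ is (positively) homogeneous along rays through $\theta$, whereas the identity $\langle Jw,P_Kw\rangle=\|P_Kw\|_X^2$ amounts, in effect, to a linearity property of $J$ that is available in Hilbert space but fails in a genuinely non-Hilbertian Banach space.
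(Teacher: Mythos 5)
Your proof is correct. The positive-homogeneity part is essentially the paper's own argument: both of you pass $\lambda$ (resp.\ $t$) through $J$ by homogeneity in the variational inequality \eqref{KL3-S2.3-NP-P2.6-E1} and use that $\lambda^{-1}z$ ranges over $K$ because the vertex is $\theta$. Where you differ is the counterexample to \eqref{KKL4-E3.4}. The paper recycles the ray $K=[\theta,u\lceil$ with $u=(-25,-37,-77)$ from the proof of Theorem~\ref{KKL4-T3.2}, takes $w=(-28,-35,-76)$ so that $w-u=(3,-2,-1)$ and the already-verified orthogonality gives $P_Kw=u$, and then concludes indirectly via the estimate $|\langle Jw,P_Kw\rangle|\le\|w\|_X\|u\|_X<\|u\|_X^2$, never computing $\langle Jw,u\rangle$ explicitly. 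You instead take the coordinate ray $K=\{(\alpha,0,0):\alpha\ge0\}$ and $w=(1,1,0)$, identify $P_Kw=(1,0,0)$ by a one-variable minimization (or the variational check $J(0,1,0)=(0,1,0)$ being orthogonal to the ray), and compute both sides exactly: $\langle Jw,P_Kw\rangle=2^{-1/3}\ne 1=\|P_Kw\|_X^2$. Your route is more elementary and self-contained, and it exhibits the exact numerical discrepancy; the paper's route buys economy by reusing its earlier computation and by replacing an exact evaluation with a norm estimate. Both are valid; your closing remark about the failure being tied to the non-linearity of $J$ outside Hilbert space is a fair heuristic, though it is commentary rather than part of the proof.
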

\begin{proof} For any $t>0$, since $K$ is a closed, convex and pointed cone in $X$, for any $x\in X$, we have
$$\langle J(tx-tP_Kx),tP_Kx-z\rangle=t^2\langle J(x-P_Kx),P_Kx-t^{-1}z\rangle\geq 0,\quad \text{for all}\ z\in K,$$
and by appealing to the basic variational property of $P_K$, this implies that $P_K(tx)=tP_Kx.$

We next construct a counterexample to prove \eqref{KKL4-E3.4}. Let $x=\mathds{R}^3$ be as in Example~\ref{KL4-Ex2.1}. Let $u=(-25,-37,-77)$ and $K=[\theta, u\lceil$. We take a point $w=(-28,-35,-76)$. Then $w-u=(3,-2,-1).$ By the proof of Theorem~\ref{KKL4-T3.2}, we have
$$\langle J(w-u),u-\alpha u\rangle=(1-\alpha)\left\langle \left(\frac{9}{\sqrt[3]{36}},\frac{-4}{\sqrt[3]{36}},\frac{-1}{\sqrt[3]{36}}\right),(-25,-37,-77)\right\rangle=0,\ \text{for any}\ \alpha u\in K,\ \alpha\in [0,\infty).$$
By the basic variational principle, we have $P_K(w)=u.$ Next we calculate,
$$\|w\|_X=\sqrt[3]{28^3+35^3+76^3}<\|u\|_X=\sqrt[3]{25^3+37^3+77^3},$$
which implies that
$$|\langle Jw,P_Kw\rangle|=|\langle Jw,u\rangle |\leq \|Jw\|_{X^*}\|u\|_X=\|w\|_X\|u\|_X<\|u\|^2_X,$$
which verifies \eqref{KKL4-E3.4}. The proof is complete.
\end{proof}
\section{Generalized dual cones with respect to the generalized projection $\pi$}
We now study the generalized dual cone of $K$ for the generalized projection $\pi$. We first recall some properties of the inverse image of the vertex of a cone by the generalized projection $\pi_C$ in $X^*$.

Given a uniformly convex and uniformly smooth Banach space $X$ with dual space $X^*$ and a cone $K$ with vertex at $v$, we recall that
\begin{equation}\label{KKL4-E4.1}
\pi_C^{-1}(v)=\{\psi\in X^*:\ \pi_C(\psi)=v\}.
\end{equation}
\begin{thr}\label{KKL4-T4.1} Let $X$ be a uniformly convex and uniformly smooth Banach space with dual $X^*$ and let $K$ be a closed and convex cone in $X$ with vertex at $v$. Then,
\begin{description}
\item[(a)] $\pi_K^{-1}(v)$ is a closed and convex cone  in $X^*$ with vertex at $Jv$.
\item[(b)] $\pi^{-1}_{\pi_K^{-1}(v)}(Jv)=K.$
\end{description}
\end{thr}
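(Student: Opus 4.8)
The plan is to run everything off the variational characterization of the generalized projection in Proposition~\ref{KL3-S2.3-NP-P2.6}. Applying it with $C=K$ and $y=v$ (legitimate since $v\in K$) identifies
$$\pi_K^{-1}(v)=\{\psi\in X^*:\ \langle\psi-Jv,v-z\rangle\geq 0\ \text{for all}\ z\in K\}.$$
Part (a) is then immediate: for each fixed $z\in K$ the functional $\psi\mapsto\langle\psi-Jv,v-z\rangle$ is affine and continuous, so $\pi_K^{-1}(v)$ is an intersection of closed half-spaces, hence closed and convex; it contains $Jv$ since $\langle Jv-Jv,v-z\rangle=0$; and for $\psi\in\pi_K^{-1}(v)$ and $t\geq 0$ one has $\langle(Jv+t(\psi-Jv))-Jv,v-z\rangle=t\langle\psi-Jv,v-z\rangle\geq 0$, so $\pi_K^{-1}(v)$ is a cone with vertex $Jv$.

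For part (b) I would first set up the second projection with care. Since $X$ is uniformly convex and uniformly smooth, so is $X^*$, and by reflexivity its dual is $X$, with normalized duality map $J^*$, the inverse of $J$; in particular $J^*(Jv)=v$. By part (a) the set $L:=\pi_K^{-1}(v)$ is nonempty, closed and convex, so $\pi_L:X\to L$ is well defined and $Jv\in L$. Applying the variational characterization of the generalized projection now in the space $X^*$, with $C=L$, $y=Jv$, and $J^*(Jv)=v$, gives
$$\pi_L^{-1}(Jv)=\{x\in X:\ \langle Jv-\zeta,x-v\rangle\geq 0\ \text{for all}\ \zeta\in L\}.$$
It remains to show that this set is $K$.

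The inclusion $K\subseteq\pi_L^{-1}(Jv)$ is a one-line check: for $x\in K$ and $\zeta\in L$, substituting $z=x$ into the inequality defining $\zeta\in\pi_K^{-1}(v)$ gives $\langle\zeta-Jv,v-x\rangle\geq 0$, that is, $\langle Jv-\zeta,x-v\rangle\geq 0$. For the reverse inclusion, let $x\notin K$. Since $K$ is closed and convex, Hahn--Banach separation yields $\phi\in X^*$ and $\alpha\in\mathds{R}$ with $\langle\phi,x\rangle<\alpha\leq\langle\phi,z\rangle$ for all $z\in K$. Because $K$ is a cone with vertex $v$, $v\in K$ and $v+t(z-v)\in K$ for all $z\in K$, $t>0$; from $\langle\phi,v\rangle+t\langle\phi,z-v\rangle\geq\alpha$ and $t\to\infty$ we get $\langle\phi,z-v\rangle\geq 0$ for all $z\in K$, while $v\in K$ forces $\langle\phi,v\rangle\geq\alpha>\langle\phi,x\rangle$, hence $\langle\phi,x-v\rangle<0$. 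Now put $\zeta:=Jv-\phi$: then $\langle\zeta-Jv,v-z\rangle=\langle\phi,z-v\rangle\geq 0$ for all $z\in K$, so $\zeta\in L$, whereas $\langle Jv-\zeta,x-v\rangle=\langle\phi,x-v\rangle<0$, so $x\notin\pi_L^{-1}(Jv)$. Hence $\pi_L^{-1}(Jv)=K$.

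The step I expect to be the main obstacle is the bookkeeping at the start of (b): one must recognize that the generalized projection onto a subset of $X^*$ is governed by the duality map $J^*$ of $X^*$ rather than $J$, that $J^*(Jv)=v$, and that under the reflexive identification $(X^*)^{*}=X$ the pairing in the variational characterization is read between $X^*$ and $X$ in the expected way. Everything else follows mechanically from the variational characterization and the standard Hahn--Banach separation of a point from a closed convex cone. Conceptually, after translating $K$ and $L$ so that their vertices sit at the origin, $L-Jv$ is the (negative) polar cone of $K-v$ and $\pi_L^{-1}(Jv)-v$ is its bipolar, so (b) is really the bipolar theorem for closed convex cones in a reflexive Banach space; I would flag this as the reason the identity holds here, in sharp contrast with the metric-projection dual cone of Theorem~\ref{KKL4-T3.2}, where the nonlinearity of $J$ inside the defining inequality breaks the bipolar-type identity.
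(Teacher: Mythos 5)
Your proof is correct, and for the substantive half of the theorem it takes a genuinely different route from the paper. The setup is shared: both you and the paper read $\pi_K^{-1}(v)$ off the variational characterization as $\{\psi\in X^*:\langle\psi-Jv,v-z\rangle\geq 0,\ \forall z\in K\}$, and the inclusion $K\subseteq\pi^{-1}_{\pi_K^{-1}(v)}(Jv)$ is the same one-line substitution in both arguments (for part (a) the paper merely cites an earlier result, whereas you give a short self-contained proof via intersections of closed half-spaces, which is cleaner). The divergence is in the reverse inclusion of (b). You argue by separation: given $x\notin K$, Hahn--Banach produces $\phi$ with $\langle\phi,z-v\rangle\geq 0$ on $K$ and $\langle\phi,x-v\rangle<0$, and $\zeta=Jv-\phi$ is then an element of $\pi_K^{-1}(v)$ witnessing $x\notin\pi^{-1}_{\pi_K^{-1}(v)}(Jv)$; as you note, this is exactly the bipolar theorem after translating the vertices to the origin, and it makes transparent why the identity survives here while the metric-projection dual cone of Theorem~\ref{KKL4-T3.2} fails to be convex ($J$ never acts on the unknown element in the defining inequality). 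The paper instead argues constructively: for $z\in\pi^{-1}_{\pi_K^{-1}(v)}(Jv)$ it builds the specific functional $\psi=J(z-P_Kz)+Jv$, checks via the variational inequality for $P_K$ that $\psi\in\pi_K^{-1}(v)$, and then tests the defining inequality of $z$ against this $\psi$ to obtain $\|z-P_Kz\|^2\leq 0$, hence $z\in K$. Your route is shorter and more conceptual, at the price of invoking separation as an external tool; the paper's route stays entirely inside the projection machinery, exhibits an explicit separating element built from the metric projection, and incidentally displays the interplay between $P_K$ and $\pi_K$ that the paper is interested in. Both proofs handle the duality bookkeeping for $\pi$ onto a subset of $X^*$ (that the relevant duality map is $J^*=J^{-1}$ and $J^*(Jv)=v$) in the same way, and your explicit attention to that point is warranted.
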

\begin{proof} (a) See Theorem~\ref{T3.1Ten}. (b) For a fixed $z\in K$, we have
$$\langle \psi-Jv,v-z\rangle\geq 0,\quad \text{for all}\ \psi\in \pi_K^{-1}(v),$$
which, taking into account the identity $J^*=J^{-1}$, implies that
$$\langle Jv-\psi,z-J^*(Jv)\rangle\geq 0,\quad \text{for all}\ \psi\in \pi_K^{-1}(v).$$
By the basic variational principle for $\pi_{\pi_K^{-1}(v)}$, we obtain that $z\in \pi_{\pi_K^{-1}(v)}^{-1}(Jv),$ for all $z\in K,$ proving
\begin{equation}\label{KKL4-E4.2}
K\subseteq \pi_{\pi_K^{-1}(v)}^{-1}(Jv).
\end{equation}

For the converse, for any $z\in \pi_{\pi_K^{-1}(v)}^{-1}(Jv)$, we have $\pi_{\pi_K^{-1}(v)}(z)=Jv$. Appealing to the variational principle for $\pi_{\pi^{-1}_{K}(v)}$ once again, we have
$$\langle Jv-\psi,z-J^*(Jv)\rangle\geq 0,\quad \text{for all}\ \psi\in \pi_K^{-1}(v),$$
and hence
$$\langle \psi-Jv,v-z\rangle\geq 0,\quad \text{for all}\ \psi\in \pi_K^{-1}(v).$$
We recall that for $\ell\in X^*$, we have
\begin{equation}\label{KKL4-E4.4}
\ell\in \pi_K^{-1}(v)\quad \Leftrightarrow\quad \langle \ell-Jv,v-x\rangle\geq 0,\quad \text{for all}\ x\in K.
\end{equation}
For the given $z\in \pi_{\pi_K^{-1}(v)}^{-1}(Jv)$, let $y=z-P_Kz+v.$ Then, for any $x\in K,$ we define
$$w=x+P_Kz-v=v+(P_Kz-v)+(x-v).$$
Since $K$ is a closed and convex cone with vertex $v$, for all $x\in K,$ we have that $w\in K$. By the variational principle for $P_K$, we have
\begin{equation}\label{KKL4-E4.5}
\langle J(y-v),v-x\rangle=\langle J(z-P_Kz),v-x\rangle=\langle J(z-P_Kz),P_Kz-w\rangle\geq 0,\quad \text{for all}\ x\in K,
\end{equation}
which implies that $P_Ky=v$, that is, $y\in P_K^{-1}(v)$. Now, let
\begin{equation}\label{KKL4-E4.6}
\psi=J(y-v)+Jv.
\end{equation}
Then $\psi\in X^*$. By \eqref{KKL4-E4.5}, we have
\begin{equation}\label{KKL4-E4.7}
\langle \psi-Jv,v-x\rangle=\langle J(y-v),v-x\rangle  \geq 0,\quad \text{for all}\ x\in K.
\end{equation}
By \eqref{KKL4-E4.4} and \eqref{KKL4-E4.7}, we have $\psi\in \pi_K^{-1}(v)$. Since $z\in \pi_{\pi_K^{-1}(v)}^{-1}(Jv)$, by the variational principle, we have
$$\langle Jv-\gamma,z-J^*Jv\rangle\geq 0,\quad \text{for all}\ \gamma\in \pi_{K}^{-1}(v) ,$$
that is,
$$\langle \gamma-Jv,v-z\rangle\geq 0,\quad \text{for all}\ \gamma\in \pi_{K}^{-1}(v) ,$$
which, due to the containment $\psi\in \pi_K^{-1}(v)$, implies that
$$\langle \psi-Jv,v-z\rangle\geq 0.$$
Then, using \eqref{KKL4-E4.6}, we have $\langle J(y-v),v-z\rangle\geq 0.$ Then,
$$0\geq \langle J(y-v),z-v\rangle=\langle J(z-P_K(z),z-P_K(z)+P_Kz-v)\rangle=\|z-P_Kz\|^2+\langle J(z-P_Kz),P_Kz-v\rangle $$
which due to $\langle J(z-P_Kz),P_Kz-v\rangle \geq 0$ implies that $\|z-P_Kz\|^2=0,$ that is, $z=P_Kz\in K.$ Since $z$ is arbitrary in $\pi_{\pi_{K}^{-1}(v)}^{-1}(Jv)$, we obtain $ \pi_{\pi_{K}^{-1}(v)}^{-1}(Jv)\subseteq K$. This, in view of \eqref{KKL4-E4.2}, completes the proof.\end{proof}
\begin{defn} Let $X$ be a uniformly convex and uniformly smooth Banach space with dual $X^*$, and let $K$ be a cone in $X$ with vertex at $v$. We define the generalized dual cone of $K$ in $X^*$ with respect to the generalized projection $\pi$ by
\begin{equation}\label{KKL4-E4.12}
K_{\pi}^{\perp}=\{\psi\in X^*:\ \langle \psi-Jv,v-z\rangle\geq 0,\quad \text{for all}\ z\in K \}.
\end{equation}
\end{defn}
\begin{thr}\label{KKL4-T4.3} Let $X$ be a uniformly convex and uniformly smooth Banach space with dual $X^*$, and let $K$ be a cone in $X$ with vertex at $v$. Then the following statements hold:
\begin{description}
\item[(a)] $K_{\pi}^{\perp}=\pi_{K}^{-1}(v)$.
\item[(b)] $K_{\pi}^{\perp}$ is a closed and convex cone with vertex at $Jv$ in $X^*$.
\item[(c)] $K$ and $K_{\pi}^{\perp}$ are generalized dual of each other: $K=(K_{\pi}^{\perp})_{\pi}^{\perp}.$
\end{description}
\end{thr}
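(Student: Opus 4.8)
The plan is to deduce all three parts from the variational characterization of the generalized projection recorded in Proposition~\ref{KL3-S2.3-NP-P2.6}(2), together with Theorem~\ref{KKL4-T4.1}. I begin with (a). Since $K$ is a cone with vertex $v$ we have $v\in K$, so it is legitimate to apply the variational characterization of $\pi_K$ at the point $y=v$: for any $\psi\in X^*$,
$$v=\pi_K(\psi)\quad\Longleftrightarrow\quad \langle \psi-Jv,\,v-z\rangle\ge 0\ \ \text{for all}\ z\in K.$$
The right-hand condition is exactly the membership condition appearing in the definition \eqref{KKL4-E4.12} of $K_\pi^\perp$, hence $\psi\in K_\pi^\perp$ if and only if $\psi\in\pi_K^{-1}(v)$, which is (a).

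For (b) I would argue directly from \eqref{KKL4-E4.12}. For each fixed $z\in K$ the set $H_z=\{\psi\in X^*:\langle\psi-Jv,\,v-z\rangle\ge 0\}$ is a closed half-space in $X^*$, hence closed and convex, and $K_\pi^\perp=\bigcap_{z\in K}H_z$; therefore $K_\pi^\perp$ is closed and convex. For the cone property, take $\psi\in K_\pi^\perp$ and $t>0$; by linearity of the pairing,
$$\langle (Jv+t(\psi-Jv))-Jv,\,v-z\rangle=t\,\langle\psi-Jv,\,v-z\rangle\ge 0\quad\text{for all}\ z\in K,$$
so $Jv+t(\psi-Jv)\in K_\pi^\perp$; since also $Jv\in K_\pi^\perp$ (the defining inequality becomes $0\ge0$), $K_\pi^\perp$ is a cone with vertex $Jv$. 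Alternatively one may simply combine (a) with Theorem~\ref{KKL4-T4.1}(a). Note the sharp contrast with the metric-projection dual cone of Theorem~\ref{KKL4-T3.2}: here $\psi$ enters the defining inequality \emph{linearly}, so convexity survives, whereas in \eqref{KKL4-E3.1} the nonlinearity of $x\mapsto J(x-v)$ destroys it.

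Part (c) is the delicate step and the real obstacle. First I would make precise what $(K_\pi^\perp)_\pi^\perp$ means. By (b), $K_\pi^\perp$ is a closed convex cone in $X^*$ with vertex $Jv$; since $X$ is reflexive we identify $X^{**}=X$, and the normalized duality map of $X^*$ is $J^{-1}=J^*$; moreover $X^*$ is again uniformly convex and uniformly smooth, so the generalized-projection machinery (and Theorem~\ref{KKL4-T4.1}) applies in $X^*$. Applying the construction \eqref{KKL4-E4.12} to the cone $K_\pi^\perp\subseteq X^*$ with vertex $Jv$, and using $J^*(Jv)=v$ to track the identifications, one obtains
$$(K_\pi^\perp)_\pi^\perp=\{x\in X:\ \langle Jv-\psi,\,x-v\rangle\ge 0\ \text{for all}\ \psi\in K_\pi^\perp\}=\{x\in X:\ \langle\psi-Jv,\,v-x\rangle\ge 0\ \text{for all}\ \psi\in K_\pi^\perp\}.$$
Now apply (a) \emph{in the space} $X^*$ to the cone $K_\pi^\perp$ (whose vertex is $Jv$): this identifies the set above with $\pi_{K_\pi^\perp}^{-1}(Jv)$. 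Substituting $K_\pi^\perp=\pi_K^{-1}(v)$ from part (a) gives $(K_\pi^\perp)_\pi^\perp=\pi_{\pi_K^{-1}(v)}^{-1}(Jv)$, and Theorem~\ref{KKL4-T4.1}(b) yields $\pi_{\pi_K^{-1}(v)}^{-1}(Jv)=K$. Hence $(K_\pi^\perp)_\pi^\perp=K$.

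The main difficulty, as indicated, lies entirely in (c): keeping the dual-space bookkeeping straight — the canonical identification $X^{**}=X$, the equality $J_{X^*}=J^{-1}=J^*$, the observation that the vertex of $K_\pi^\perp$ is $Jv$ with $J^*(Jv)=v$, and the verification that uniform convexity and uniform smoothness (hence single-valuedness of the generalized projection and the applicability of Theorem~\ref{KKL4-T4.1}) pass from $X$ to $X^*$. One should also flag that Theorem~\ref{KKL4-T4.1} is stated for closed convex cones, so the equality in (c) is established under that standing hypothesis on $K$; absent it, the same computation only gives $(K_\pi^\perp)_\pi^\perp=\pi_{\pi_K^{-1}(v)}^{-1}(Jv)$, i.e. the bidual cone "closes and convexifies" $K$. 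Parts (a) and (b) are routine once the variational characterization is invoked.
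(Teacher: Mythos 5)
Your proposal is correct and follows essentially the same route as the paper: (a) via the variational characterization of $\pi_K$ at $v$, (b) from (a) together with Theorem~\ref{KKL4-T4.1}(a) (your direct half-space argument is a harmless variant), and (c) by applying (a) in $X^*$ to $K_{\pi}^{\perp}$ and invoking Theorem~\ref{KKL4-T4.1}(b). Your explicit dual-space bookkeeping in (c), and the observation that the argument really uses $K$ closed and convex (as in Theorem~\ref{KKL4-T4.1}), merely make explicit what the paper leaves implicit.
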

\begin{proof} (a). By the basic variational principle for $\pi_K$, for any $\psi\in X^*$ and $v\in K$, we have
$$v=\pi_K(\psi)\quad \Leftrightarrow\quad \langle \psi-Jv,v-z\rangle\geq 0,\quad \text{for all}\ z\in K,$$
which, due to the definition of $K_{\pi}^{\perp}$ and \eqref{KKL4-E4.1} at once implies (a). (b) Since $\pi_{K}^{-1}(v)$ is a closed and convex cone with vertex at $Jv$ in $X^*$, (b) follows at once (a). Finally, (c) follows from (a) and Theorem~\ref{KKL4-T4.1}.
\end{proof}

\begin{co} Let $X$ be a uniformly convex and uniformly smooth Banach space, and let $C$ and $K$ be closed and convex cones in $X$ with a common vertex at $v$ satisfying $C\cap K\ne \{v\}$. Then,
\begin{description}
\item[(a)] $C\subseteq K \quad \Leftrightarrow\quad C_{\pi}^{\perp}\supseteq K_{\pi}^{\perp}.$
\item[(b)] $(C\cap K)_{\pi}^{\perp}=\overline{\text{co}(C_{\pi}^{\perp}\cup K_{\pi}^{\perp})}.$
\end{description}
\end{co}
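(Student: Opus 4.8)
The plan is to treat $K\mapsto K_\pi^\perp$ as an inclusion-reversing involution and to deduce both assertions formally from three facts already in hand: (i) the defining inequality in \eqref{KKL4-E4.12}, which makes the \emph{antitone} property transparent; (ii) the biduality furnished by Theorems~\ref{KKL4-T4.1} and \ref{KKL4-T4.3}, namely $(K_\pi^\perp)_\pi^\perp=K$ for every closed convex cone $K\subseteq X$ with vertex $v$; and (iii) Theorem~\ref{KKL4-T4.3}(b), that $K_\pi^\perp$ is always a closed convex cone (with vertex $Jv$). Since $X$ is reflexive and $J^*=J^{-1}$, I would first record that the operation $L\mapsto L_\pi^\perp$ and the properties (i)--(iii) apply verbatim to closed convex cones $L\subseteq X^*$ with vertex $Jv$, producing closed convex cones in $X$ with vertex $v$; in particular the dual-side biduality $(L_\pi^\perp)_\pi^\perp=L$ holds by the same proof as Theorem~\ref{KKL4-T4.1}(b). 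Setting up this two-sided polarity is the only real infrastructure needed.

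For part (a), the implication $C\subseteq K\Rightarrow C_\pi^\perp\supseteq K_\pi^\perp$ is immediate from \eqref{KKL4-E4.12}: a functional $\psi$ satisfying $\langle\psi-Jv,v-z\rangle\ge 0$ for all $z\in K$ satisfies it \emph{a fortiori} for all $z\in C$. For the converse, apply the antitone property on the dual side to $K_\pi^\perp\subseteq C_\pi^\perp$ to get $(C_\pi^\perp)_\pi^\perp\subseteq(K_\pi^\perp)_\pi^\perp$, and then invoke the biduality $(C_\pi^\perp)_\pi^\perp=C$ and $(K_\pi^\perp)_\pi^\perp=K$ to conclude $C\subseteq K$.

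For part (b), note first that $C\cap K$ is a closed convex cone with vertex $v$, nontrivial by the hypothesis $C\cap K\ne\{v\}$, and that $L:=\overline{\text{co}(C_\pi^\perp\cup K_\pi^\perp)}$ is a closed convex cone in $X^*$ with vertex $Jv$ (a union of two cones through $Jv$, then its convex hull, then its closure). From $C\cap K\subseteq C$ and $C\cap K\subseteq K$ together with part (a), $C_\pi^\perp\subseteq(C\cap K)_\pi^\perp$ and $K_\pi^\perp\subseteq(C\cap K)_\pi^\perp$; since $(C\cap K)_\pi^\perp$ is closed and convex, it contains $L$, giving the inclusion $\supseteq$. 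For $\subseteq$, from $C_\pi^\perp\subseteq L$ and $K_\pi^\perp\subseteq L$ the dual-side antitone property yields $L_\pi^\perp\subseteq(C_\pi^\perp)_\pi^\perp=C$ and $L_\pi^\perp\subseteq(K_\pi^\perp)_\pi^\perp=K$, so $L_\pi^\perp\subseteq C\cap K$; applying $\perp$ once more and using $(L_\pi^\perp)_\pi^\perp=L$ gives $(C\cap K)_\pi^\perp\subseteq L$. (Alternatively, this last inclusion can be obtained directly by a Hahn--Banach separation in $X^*$: separate a putative $\psi_0\in(C\cap K)_\pi^\perp\setminus L$ from the closed convex cone $L$, and feed the separating element of $X=X^{**}$ back into the defining inequality for $(C\cap K)_\pi^\perp$ to reach a contradiction.) Combining the two inclusions proves (b).

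The mathematics is entirely formal once the framework is fixed; the one point requiring care -- and where a careless write-up could go wrong -- is the rigorous justification that Theorems~\ref{KKL4-T4.1} and \ref{KKL4-T4.3} genuinely furnish a symmetric, two-sided polarity between closed convex cones in $X$ (vertex $v$) and in $X^*$ (vertex $Jv$), so that the $\perp$-operation may be iterated freely on either side. In particular, no computation in the spirit of the counterexamples of Section~\ref{KL4-S-DCMP} is needed, which reflects the fact that for the \emph{generalized} projection the Hilbert-space picture is fully recovered.
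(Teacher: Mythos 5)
Your proposal is correct and follows essentially the same route as the paper: the forward implication in (a) directly from the defining inequality \eqref{KKL4-E4.12}, the converse and both inclusions in (b) by iterating the antitone $\perp$-operation and invoking the biduality $(K_\pi^\perp)_\pi^\perp=K$ of Theorems~\ref{KKL4-T4.1} and \ref{KKL4-T4.3}, plus the fact that $(C\cap K)_\pi^\perp$ is a closed convex cone containing $C_\pi^\perp\cup K_\pi^\perp$. The one point you flag explicitly --- that the biduality must also be available on the dual side, for closed convex cones in $X^*$ with vertex $Jv$ (which follows since $X^*$ is itself uniformly convex and uniformly smooth and $J_{X^*}=J^{-1}$) --- is used silently in the paper's proof as well, so your write-up is, if anything, slightly more careful on that step.
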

\begin{proof} (a) The proof of $C\subseteq K \ \Rightarrow\ C_{\pi}^{\perp}\supseteq K_{\pi}^{\perp}$ is evident. The converse follows from part (c) of Theorem~\ref{KKL4-T4.3}.

(b) It follows at once that $C\cap K$ is a closed and convex cone in $X$ with vertex $v$. By (a), the inclusion $C\cap K\subseteq C$ implies that $(C\cap K)_{\pi}^{\perp}\supseteq C_{\pi}^{\perp}$ and the inclusion $C\cap K\subseteq K$ implies that $(C\cap K)_{\pi}^{\perp}\supseteq K_{\pi}^{\perp}$, and hence $(C\cap K)_{\pi}^{\perp}\supseteq C_{\pi}^{\perp}\cap C_{\pi}^{\perp}.$ However, since $(C\cap K)_{\pi}^{\perp}$ is a closed and convex cone with vertex at $Jv$, it follows that
$$(C\cap K)_{\pi}^{\perp}\supseteq \overline{\text{co}(C_{\pi}^{\perp}\cup K_{\pi}^{\perp})}. $$
By (c) of Theorem~\ref{KKL4-T4.3} and (a), we have
\begin{equation}\label{KKL4-E4.16} C\cap K = ((C\cap K)_{\pi}^{\perp})_{\pi}^{\perp}\subseteq (\overline{\text{co}(C_{\pi}^{\perp}\cup K_{\pi}^{\perp})})_{\pi}^{\perp}.
\end{equation}
On the other hand, from $C_{\pi}^{\perp}\subseteq \overline{\text{co}(C_{\pi}^{\perp}\cup K_{\pi}^{\perp})}$ and $K_{\pi}^{\perp}\subseteq \overline{\text{co}(C_{\pi}^{\perp}\cup K_{\pi}^{\perp})}$, we have
$C=(C_{\pi}^{\perp})_{\pi}^{\perp}\supseteq \overline{\text{co}(C_{\pi}^{\perp}\cup K_{\pi}^{\perp})}$ and $K=(K_{\pi}^{\perp})_{\pi}^{\perp}\supseteq \overline{\text{co}(C_{\pi}^{\perp}\cup K_{\pi}^{\perp})}$. Thus, by \eqref{KKL4-E4.16}, we have
$$C\cap K=((C\cap K)_{\pi}^{\perp})_{\pi}^{\perp}\subseteq (\overline{\text{co}(C_{\pi}^{\perp}\cup K_{\pi}^{\perp})})_{\pi}^{\perp}\subseteq C\cap K,$$
which proves the desired identity. Since $(C\cap K)_{\pi}^{\perp}$ and $\overline{\text{co}(C_{\pi}^{\perp}\cup K_{\pi}^{\perp})}$ are both closed and convex cones with vertex at $Jv$, we have the result by using Theorem~\ref{KKL4-T4.3}.
\end{proof}

The following result can be proved in an analogous fashion:
\begin{co} Let $X$ be a uniformly convex and uniformly smooth Banach space, and let $\{K_{\lambda}:\ \lambda \in \Lambda\}$ be a set of closed and convex cones with a common vertex at $v$ such that $\cap_{\lambda\in \Lambda}K_{\lambda}\ne \{v\}.$
Then
$$(\cap_{\lambda \in \Lambda} (K_{\lambda})_{\pi}^{\perp} =\overline{\text{co}(\cup_{\lambda\in \Lambda}(K_{\lambda})_{\pi}^{\perp})},$$
where $\Lambda$ is an arbitrary given index set.
\end{co}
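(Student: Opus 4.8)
The plan is to imitate the proof of part (b) of the preceding corollary, replacing the two-cone intersection by an arbitrary one, and to run the by-now standard loop ``order-reversing duality $+$ biduality'', the biduality being the identity $K=(K_\pi^\perp)_\pi^\perp$ of Theorem~\ref{KKL4-T4.3}(c). First I would fix notation and record the preliminary facts. Put $L=\cap_{\lambda\in\Lambda}K_\lambda$ and $M=\overline{\text{co}(\cup_{\lambda\in\Lambda}(K_\lambda)_\pi^\perp)}$. As an intersection of closed convex cones with common vertex $v$, the set $L$ is again a closed convex cone with vertex $v$, and $L\ne\{v\}$ by hypothesis, so $\pi_L$ and $L_\pi^\perp=\pi_L^{-1}(v)$ are meaningful; by Theorem~\ref{KKL4-T4.3}(b), $L_\pi^\perp$ is a closed convex cone with vertex $Jv$ in $X^*$. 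Dually, each $(K_\lambda)_\pi^\perp$ is a closed convex cone with vertex $Jv$ by Theorem~\ref{KKL4-T4.3}(b); arguing exactly as in the two-cone case, the convex hull of a union of cones with common vertex $Jv$ is again a cone with vertex $Jv$, and passing to the closure preserves this, so $M$ is a closed convex cone with vertex $Jv$. This small check is the only point requiring verification beyond the two-cone argument.

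Next I would establish $M\subseteq L_\pi^\perp$. For each fixed $\mu\in\Lambda$ we have $L\subseteq K_\mu$, so directly from the defining inequality \eqref{KKL4-E4.12} the order-reversing inclusion $(K_\mu)_\pi^\perp\subseteq L_\pi^\perp$ holds (this is the ``evident'' half of part (a) of the preceding corollary). Hence $\cup_{\lambda\in\Lambda}(K_\lambda)_\pi^\perp\subseteq L_\pi^\perp$, and since $L_\pi^\perp$ is closed and convex we obtain $M\subseteq L_\pi^\perp$.

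Then I would show $M_\pi^\perp=L$ and deduce the claim. Applying order-reversing to $M\subseteq L_\pi^\perp$ gives $(L_\pi^\perp)_\pi^\perp\subseteq M_\pi^\perp$, and $(L_\pi^\perp)_\pi^\perp=L$ by Theorem~\ref{KKL4-T4.3}(c), so $L\subseteq M_\pi^\perp$. Conversely, for each $\mu\in\Lambda$ we have $(K_\mu)_\pi^\perp\subseteq M$, hence $M_\pi^\perp\subseteq((K_\mu)_\pi^\perp)_\pi^\perp=K_\mu$ by order-reversing and Theorem~\ref{KKL4-T4.3}(c); intersecting over $\mu\in\Lambda$ yields $M_\pi^\perp\subseteq L$, so $M_\pi^\perp=L$. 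Since $M$ is a closed convex cone with vertex $Jv$, Theorem~\ref{KKL4-T4.3}(c) applies to $M$ as well, giving $L_\pi^\perp=(M_\pi^\perp)_\pi^\perp=M$, i.e. $(\cap_{\lambda\in\Lambda}K_\lambda)_\pi^\perp=\overline{\text{co}(\cup_{\lambda\in\Lambda}(K_\lambda)_\pi^\perp)}$, the asserted identity.

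The main obstacle is purely bookkeeping: one must be sure that $M$ is a genuine closed convex cone with vertex $Jv$ so that the biduality identity of Theorem~\ref{KKL4-T4.3}(c) can be invoked for it, and one must keep in mind that, since $X$ is uniformly convex and hence reflexive, the ``second dual cone'' of a cone living in $X^*$ is taken with respect to the duality map $J^\ast=J^{-1}$ under the canonical identification $X^{\ast\ast}=X$; with this understood, the proof is just a routine iteration of the two ingredients, structurally identical to the two-cone case.
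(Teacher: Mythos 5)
Your proposal is correct and follows essentially the same route the paper intends: it is the direct generalization of the two-cone corollary's proof, combining the order-reversing property obtained from definition \eqref{KKL4-E4.12} with the biduality identity of Theorem~\ref{KKL4-T4.3}(c), first showing $\overline{\text{co}(\cup_{\lambda}(K_{\lambda})_{\pi}^{\perp})}$ and $(\cap_{\lambda}K_{\lambda})_{\pi}^{\perp}$ have the same generalized dual cone $\cap_{\lambda}K_{\lambda}$ and then concluding equality of the two closed convex cones with vertex $Jv$. Your added bookkeeping (that the closed convex hull of the union is again a cone with vertex $Jv$, and that the second dual of a cone in $X^*$ is taken via $J^*=J^{-1}$ under reflexivity) matches what the paper implicitly assumes, so no further changes are needed.
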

\section{Faces and visions in Banach spaces}
\subsection{Faces in Banach spaces}
\begin{defn}\label{KKL4-D5.2} Let $X$ be a Banach space with dual $X^*$ and let $C$ be a nonempty, closed, and convex subset of $X$. For any $\psi\in X^*,$ we define the face of $\psi$ on $C$ by
$$\mathcal{F}_C(\psi)=\{y\in C:\ \langle \psi,y\rangle=\sup_{x\in C}\langle \psi,x\rangle \}.$$
\end{defn}

\begin{rem}\label{KKL4-L5.3}
It is evident from the above definition that for any $\psi\in X^*,$ the set $\mathcal{F}_C(\psi)$ is either empty or a closed and convex subset of $C$. Moreover, $\mathcal{F}_C(\theta^*)=C.$
\end{rem}

Before proceeding any further, we gather a few examples to illustrate the above notion.
\begin{ex}\label{KKL4-Ex5.4} Let $X=\mathds{R}^3$ be as in Example~\ref{KL4-Ex2.1}. We take $u=(25,37,77)$ and let $C=[\theta, u\lceil=\{tu\in \mathds{R}^3:\ t\geq 0\}.$
\begin{description}
\item[(a)] Let $\psi=(-9,4,1)\in (\mathds{R}^3)^*$. Then $\mathcal{F}_C(\psi)=C.$
\item[(b)] Let $\psi=(\psi_1,\psi_2,\psi_3)\in (\mathds{R}^3)^*\ \text{with}\ \psi_{i}\leq 0,\ \text{for}\ i=1,2,3\ \text{and}\ \psi_1+\psi_1+\psi_3<0.$ Then $\mathcal{F}_C(\psi)=\{\theta\}.$
\item[(c)] Let $\psi=(\psi_1,\psi_2,\psi_3)\in (\mathds{R}^3)^*\ \text{with}\ \psi_{i}\geq 0,\ \text{for}\ i=1,2,3\ \text{and}\ \psi_1+\psi_1+\psi_3>0.$ Then $\mathcal{F}_C(\psi)=\emptyset.$
\end{description}
\end{ex}
\begin{proof} (a). For $\psi=(-9,4,1)\in (\mathds{R}^3)^*$, we have
$$\langle \psi,tu\rangle=\langle (-9,4,1),tu\rangle=t\langle (-9,4,1),(25,37,77)\rangle=0,\quad \text{for any}\ ut\in X,\ t\geq 0,$$
which implies that $tu\in \mathcal{F}_C(\psi)$ for all $tu\in C$. Parts (b) and (c) can be proved analogously.
\end{proof}
\begin{ex}\label{KKL4-Ex5.5} Let $(S,\mathcal{A},\mu)$ be a measure space with $\mu(S)\geq 1.$ For any $p\in [1,\infty),$ let $X=L_p(S)$ be the real Banach space of real functions defined on $S$ with norm $\|\cdot\|_p$. For any given $M>0$, let
$$C=\{f\in L_p(S):\ \|f\|_p\leq M\}.$$
Then $C$ is a nonempty, closed, and convex subset in $L_p(S)$. For any $A\in \mathcal{A}$ with $1\leq \mu(A)<\infty$, let $1_A$ denote the characteristic function of $A$, which satisfies $1_A\in L_q(S)^*=L_p(S)$, where $p,q\in [1,\infty]$ are such that $\frac{1}{p}+\frac{1}{q}=1.$ Then $\mathcal{F}_C(1_A)$ is a nonempty, closed, and convex subset of $C$ such that
\begin{equation}\label{KKL4-E5.1}
\mathcal{F}_C(1_A)=\left\{g\in C:\ \int_Ag(s)d\mu(s)=M\right\}.
\end{equation}
\end{ex}
\begin{proof} For any $g\in C,$ if $\int_Ag(s)d\mu(s)=M$, then
\begin{equation}\label{KKL4-E5.2}
\langle 1_A,g\rangle=\int_S 1_A(s)g(s)d\mu(s)=\int_Ag(s)d\mu(s)=M.
\end{equation}
For any $f\in C,$ we have
\begin{equation}\label{KKL4-E5.3} \langle 1_A,f\rangle=\int_S 1_A(s)f(s)d\mu(s)=\int_Af (s)d\mu(s)\leq \|f\|_p\leq M.
\end{equation}
Thus, \eqref{KKL4-E5.2} and \eqref{KKL4-E5.3} imply that
\begin{equation}\label{KKL4-E5.4}
\left\{g\in C:\ \int_Ag(s)d\mu(s)=M\right\} \subseteq \mathcal{F}_C(1_A).
\end{equation}

For the converse,  we define $h$ on $S$ by
$$h(s)=\frac{M}{\mu(A)}1_A(s),\quad \text{for all}\ s\in S.$$
By $1\leq \mu(A)<\infty$, we have
$$\|h\|_p=\frac{M}{\mu(A)^{\frac{p-1}{p}}}\leq M,$$
which implies that $h\in C$. By $1_A\in L_p(S)^*$, we have
\begin{equation}\label{KKL4-E5.5}
\langle 1_A,h\rangle=\int_A1_A(s)h(s)d\mu(s)=\int_A\frac{M}{\mu(A)}1_A(s)d\mu(s)=M.
\end{equation}
By the above equation, it follows that for any $g\in \mathcal{F}_C(1_A)\subseteq C$, we must have
$$M\geq \|g\|_p\geq \langle 1_A,g\rangle\geq \langle 1_A,h\rangle=M. $$
It follows that $\langle 1_A,g\rangle=M $, that is, $\int_Ag(s)d\mu(s)=M.$ This implies that
\begin{equation}\label{KKL4-E5.6}
\mathcal{F}_C(1_A)\subset \left\{g\in C:\ \int_Ag(s)d\mu(s)=M\right\}.
\end{equation}
By combining \eqref{KKL4-E5.4} and \eqref{KKL4-E5.6}, we get \eqref{KKL4-E5.1}. By \eqref{KKL4-E5.5} and \eqref{KKL4-E5.4}, we have $h\in \mathcal{F}_C(1_A)$, which shows that $\mathcal{F}_C(1_A)\ne \emptyset.$ This prove the claim.
\end{proof}
\begin{ex}\label{KKL4-Ex5.6} For any $p$ with $1\leq p<\infty$, let $X=\ell_p$ be the real Banach space of real sequences with norm $\|\cdot\|_p$. For any given $M>0$, let 
$$C=\{x\in \ell_P:\ \|x\|_p\leq M\}.$$
Then $C$ is a nonempty, closed and convex subset in $\ell_p.$ For any positive integers $m$ and $n$ with $n\geq 1.$ We define $\ell_m^n\in (\ell_p)^*=\ell_q$ by
$$(\ell_m^n)_i=
\left\{\begin{array}{lll}1,&\text{for}&i=m,m+1,\ldots,m+n-1,\\ 0,&&\text{otherwise}. \end{array}\right.$$
Then, $\mathcal{F}_C(\ell_m^n)$ is a nonempty, closed, and convex subset of $C$ such that 
\begin{equation}\label{KKL4-E5.7}
\mathcal{F}_C(\ell_m^n)=\left\{y=\{y_i\}\in C:\ \sum_{i=m}^{m+n-1}y_i=M \right\}.
\end{equation}
\end{ex}
\begin{proof} We only need to show that $\mathcal{F}_C(\ell_m^n)$ is nonempty. For this, we take $z=\{z_i\}\in \ell_p$ as follows:
$$z_i=
\left\{\begin{array}{lll}\frac{M}{n},&\text{for}&i=m,m+1,\ldots,m+n-1,\\ 0,&&\text{otherwise}. \end{array}\right.$$
Then, it is easy to verify that $z\in C$ and $z\in \mathcal{F}_C(\ell_m^n)$. The rest of the arguments are similar to the ones used in Example~\ref{KKL4-Ex5.5}.
\end{proof}
\begin{lm}\label{KKL4-L5.7} Let $X$ be a reflexive Banach space with dual space $X^*$ and let $C$ be a closed, convex and bounded set in $X$. Then for each $\psi\in X^*$, $\mathcal{F}_C(\psi)$ is nonempty, closed, and convex subset of $C$. \end{lm}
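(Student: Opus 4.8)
The plan is to reduce the nonemptiness assertion to the attainment of a linear functional on a weakly compact set, and then to obtain closedness and convexity directly from Remark~\ref{KKL4-L5.3}. First I would check that $\mathcal{F}_C(\psi)$ is well defined: since $C$ is bounded, $\rho:=\sup_{x\in C}\|x\|_X<\infty$, and therefore $|\langle\psi,x\rangle|\le\|\psi\|_{X^*}\,\rho$ for all $x\in C$, so $M:=\sup_{x\in C}\langle\psi,x\rangle$ is a finite real number.

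The core of the argument is to exploit reflexivity to make $C$ weakly compact. Since $X$ is reflexive, the closed ball $B(\rho)$ is weakly compact (Kakutani's theorem; equivalently $B(\rho)=\rho\,B(1)$ with $B(1)$ weakly compact). The set $C$ is convex and norm-closed, hence weakly closed by Mazur's theorem, and it is contained in $B(\rho)$; a weakly closed subset of a weakly compact set is weakly compact, so $C$ is weakly compact. By the very definition of the weak topology, the functional $x\mapsto\langle\psi,x\rangle$ is weakly continuous on $X$. A real-valued function that is continuous for the weak topology attains its supremum on the nonempty weakly compact set $C$; hence there is some $y_{0}\in C$ with $\langle\psi,y_{0}\rangle=M=\sup_{x\in C}\langle\psi,x\rangle$, i.e.\ $y_{0}\in\mathcal{F}_C(\psi)$, so $\mathcal{F}_C(\psi)\ne\emptyset$.

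It remains to record that $\mathcal{F}_C(\psi)$ is closed and convex. This is immediate from Remark~\ref{KKL4-L5.3}, which already states that $\mathcal{F}_C(\psi)$ is either empty or a closed and convex subset of $C$; having just shown it is nonempty, we are done. (If one prefers a self-contained argument, note that $\mathcal{F}_C(\psi)=C\cap\{x\in X:\ \langle\psi,x\rangle=M\}$ is the intersection of the closed convex set $C$ with a closed affine hyperplane, hence closed and convex.)

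The only delicate point is the transition from norm-closedness to weak-closedness of $C$, which is precisely where the convexity of $C$ is used, via Mazur's theorem; without convexity the conclusion can fail. Everything else is a routine compactness argument, so I do not anticipate further obstacles.
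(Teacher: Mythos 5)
Your proof is correct and follows essentially the same route as the paper: the paper's argument likewise asserts that $C$ is weakly compact (which, as you spell out, follows from reflexivity, boundedness, and Mazur's theorem) and that $\langle\psi,\cdot\rangle$ attains its maximum on $C$, with closedness and convexity noted as immediate. You have merely supplied the standard details that the paper leaves implicit.
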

\begin{proof} Since $C$ is weakly compact, for any $\psi\in X^*$, the function $\langle \psi,\cdot\rangle $ attains its maximum value on $C$. That is, there is $y\in C$ such that  $\langle \psi,y\rangle=\max_{x\in C}\langle \psi, x\rangle $. This implies that $y\in \mathcal{F}_C(\psi).$ The set $\mathcal{F}_C(\psi)$ is clearly, closed and convex.
\end{proof}
\begin{thr}\label{KKL4-T5.8} Let $X$ be a reflexive Banach space with dual $X^*$ and let $C$ be a nonempty, closed, and convex set in $X$. Then
\begin{description}
\item[(a)] For any $u\in X,$
$$\mathcal{F}_C(Ju)=\{y\in C:\ y=P_C(u+y)\}=\{y\in C:\ y=\pi_C(Ju+Jv)\}.$$
\item[(b)] For any $\psi\in X^*,$
$$\mathcal{F}_C(\psi)=\{y\in C:\ y=P_C(J^*\psi+y)\}=\{y\in C:\ y=\pi_C(\psi +Jy)\}.$$
\end{description}
\end{thr}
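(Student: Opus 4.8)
The plan is to reduce everything to the two variational characterizations in Proposition~\ref{KL3-S2.3-NP-P2.6}, combined with the elementary observation that, by Definition~\ref{KKL4-D5.2}, a point $y\in C$ lies in $\mathcal{F}_C(\psi)$ if and only if $\langle\psi,y-z\rangle\ge 0$ for every $z\in C$ (that is, the supremum of $\langle\psi,\cdot\rangle$ over $C$ is attained at $y$). This single family of inequalities is precisely the right-hand side of \eqref{KL3-S2.3-NP-P2.6-E1} and of \eqref{KL3-S2.4-GP-E2.5} once the arguments of $P_C$ and $\pi_C$ are chosen correctly, so each claimed set equality will follow from a direct ``if and only if'' chain; in particular this also disposes of the degenerate case $\mathcal{F}_C(\psi)=\emptyset$, since then both sides are empty.

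For part (a), fix $u\in X$ and $y\in C$. For the metric-projection equality I would set $x:=u+y$, so that $x-y=u$ and hence $J(x-y)=Ju$; then \eqref{KL3-S2.3-NP-P2.6-E1} (with $y$ in the role of the projection point) says $y=P_C(x)=P_C(u+y)$ if and only if $\langle Ju,y-z\rangle\ge 0$ for all $z\in C$, which by the observation above is exactly $y\in\mathcal{F}_C(Ju)$. For the generalized-projection equality I would set $\psi:=Ju+Jy$, so that $\psi-Jy=Ju$; then \eqref{KL3-S2.4-GP-E2.5} gives $y=\pi_C(\psi)=\pi_C(Ju+Jy)$ if and only if $\langle Ju,y-z\rangle\ge 0$ for all $z\in C$, again equivalent to $y\in\mathcal{F}_C(Ju)$. (Here the term displayed as $\pi_C(Ju+Jv)$ in the statement should read $\pi_C(Ju+Jy)$, consistently with part (b).)

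Part (b) is the same statement written with a general functional $\psi\in X^*$ in place of $Ju$, and I would prove it either by repeating the argument verbatim --- setting $x:=J^*\psi+y$ so that $J(x-y)=JJ^*\psi=\psi$, and setting $\phi:=\psi+Jy$ so that $\phi-Jy=\psi$, then invoking \eqref{KL3-S2.3-NP-P2.6-E1} and \eqref{KL3-S2.4-GP-E2.5} respectively --- or, more economically, by specializing part (a) to $u:=J^*\psi$ and using $JJ^*=I_{X^*}$, which converts $\mathcal{F}_C(Ju)$ into $\mathcal{F}_C(\psi)$, $P_C(u+y)$ into $P_C(J^*\psi+y)$, and $\pi_C(Ju+Jy)$ into $\pi_C(\psi+Jy)$.

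No step poses a genuine obstacle; the only points needing a little care are (i) checking that membership in the face is really the one-sided inequality $\langle\psi,y-z\rangle\ge 0$ \emph{for all} $z\in C$, rather than a comparison against a single point; (ii) keeping track of which argument plays the role of the projected point in \eqref{KL3-S2.3-NP-P2.6-E1} under the substitution $x=u+y$; and (iii) the standing regularity hypotheses, since the formulas presuppose that $P_C$ and $\pi_C$ are single-valued and that $J$ is invertible, so part (b) in particular requires $X$ to be uniformly convex and uniformly smooth rather than merely reflexive --- this is where assumptions beyond reflexivity actually enter.
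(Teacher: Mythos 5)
Your proposal is correct and follows essentially the same route as the paper's own proof: both reduce membership in $\mathcal{F}_C(Ju)$ to the inequality $\langle Ju,y-z\rangle\ge 0$ for all $z\in C$ and then match it against the variational characterizations \eqref{KL3-S2.3-NP-P2.6-E1} and \eqref{KL3-S2.4-GP-E2.5} via the substitutions $x=u+y$ and $\psi=Ju+Jy$, with part (b) obtained by setting $u=J^*\psi$. Your side remarks are also consistent with the paper: the term $\pi_C(Ju+Jv)$ in the statement is indeed a typo for $\pi_C(Ju+Jy)$, and the proof implicitly uses single-valuedness and invertibility of $J$ beyond mere reflexivity.
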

\begin{proof} (a) For an arbitrary $z\in C,$ by the basic variational principle for $P_C,$ we have
\begin{align*}
z\in \mathcal{F}_C(Ju) \quad \Leftrightarrow \quad &0\leq \langle Ju,z-x\rangle,\quad \text{for all}\ x\in C\\ \quad \Leftrightarrow \quad &0\leq \langle J(u+z-z),z-x\rangle,\quad \text{for all}\ x\in C.\\
 \quad \Leftrightarrow \quad &z=P_C(u+z)\\
  \quad \Leftrightarrow \quad &z\in \{y\in C:\ y=P_C(u+y)\}.
\end{align*}
This proves the first equality in (a). To prove the second inequality, for any $z\in C$, by the basic variational principle of $\pi_C$, we have
\begin{align*}
z\in F_C(Ju) \quad \Leftrightarrow \quad &0\leq \langle Ju,z-x\rangle,\quad \text{for all}\ x\in C\\ \quad \Leftrightarrow \quad &0\leq \langle Ju+Jz-Jz,z-x\rangle,\quad \text{for all}\ x\in C.\\
 \quad \Leftrightarrow \quad &z=\pi_C(Ju+Jz)\\
  \quad \Leftrightarrow \quad &z\in \{y\in C:\ y=\pi_C(Ju+Jy)\},
\end{align*}
which proves the second equality in (a).

(b) For any $\psi\in X^*,$ $J^*\psi\in X$ by substituting $J^*\psi$ for $u\in X$ in (a) and noticing $JJ^*\psi=\psi$, (b) follows at once. \end{proof}

The conclusion of Theorem~\ref{KKL4-T5.8} can be described by the form of variational inequalities.
\begin{co}\label{KKL4-C5.9} Let $X$ be a uniformly convex and uniformly smooth Banach space wth dual $X^*$ and let $C$ be a nonempty, closed, and convex set in $X$. Then
\begin{description}
\item[(a)] For any $u\in X,$ a point $y\in C$ is a solution of the variational inequality
$$\langle Ju,y-x\rangle\geq 0,\quad \text{for all}\ x\in C,$$
if and only if, $y$ is a solution to one of the following projection equations:
$$y=P_C(u+y)\quad \text{or}\quad y=\pi_C(Ju+Jy).$$
\item[(b)] For any $\psi\in X^*,$ a point $y\in C$ is a solution of the variational inequality
$$\langle \psi,y-x\rangle\geq 0,\quad \text{for all}\ x\in C,$$
if and only if, $y$ is a solution to one of the following projection equations:
$$y=P_C(J^*\psi+y)\quad \text{or}\quad y=\pi_C(\psi+jy).$$
\end{description}
\end{co}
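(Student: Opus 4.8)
The plan is to recognize that Corollary~\ref{KKL4-C5.9} is nothing more than a reformulation of Theorem~\ref{KKL4-T5.8} once the variational inequalities are rewritten in terms of faces. First I would note that the hypotheses match up: since $X$ is uniformly convex it is reflexive (Milman--Pettis), so Theorem~\ref{KKL4-T5.8} applies, and since $X$ is uniformly smooth the duality map $J$ is single-valued while uniform convexity makes $J^*$ single-valued, so all the projections $P_C(u+y)$, $\pi_C(Ju+Jy)$, $P_C(J^*\psi+y)$, $\pi_C(\psi+Jy)$ appearing in the statement are well defined.

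For part (a), fix $u\in X$ and $y\in C$. The condition $\langle Ju,y-x\rangle\geq 0$ for all $x\in C$ is equivalent to $\langle Ju,y\rangle\geq\langle Ju,x\rangle$ for all $x\in C$, that is, $\langle Ju,y\rangle=\sup_{x\in C}\langle Ju,x\rangle$, which by Definition~\ref{KKL4-D5.2} is precisely $y\in\mathcal{F}_C(Ju)$. Now I would invoke Theorem~\ref{KKL4-T5.8}(a), which gives
$$\mathcal{F}_C(Ju)=\{y\in C:\ y=P_C(u+y)\}=\{y\in C:\ y=\pi_C(Ju+Jy)\}.$$
Hence $y$ solves the variational inequality if and only if $y=P_C(u+y)$, and equivalently if and only if $y=\pi_C(Ju+Jy)$; in particular the two projection equations hold simultaneously, which is the intended meaning of ``$y$ is a solution to one of the following.'' Part (b) is obtained the same way, rewriting $\langle\psi,y-x\rangle\geq 0$ for all $x\in C$ as $y\in\mathcal{F}_C(\psi)$ and applying Theorem~\ref{KKL4-T5.8}(b); alternatively one just substitutes $u=J^*\psi$ in part (a) and uses $JJ^*\psi=\psi$.

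There is essentially no genuine obstacle here: the entire mathematical content sits in Theorem~\ref{KKL4-T5.8}, and the only things requiring care are the routine bookkeeping observation that the defining inequality of $\mathcal{F}_C$ and the variational inequality in the statement are the very same relation, and the remark that reflexivity comes for free from uniform convexity so that Theorem~\ref{KKL4-T5.8} may be applied verbatim. If anything, the mild subtlety worth flagging is that, because both descriptions in Theorem~\ref{KKL4-T5.8} coincide with $\mathcal{F}_C(\cdot)$, the metric-projection equation and the generalized-projection equation are equivalent for a given $y$, so the disjunctive phrasing in the corollary should be read as an ``either/both'' rather than an exclusive alternative.
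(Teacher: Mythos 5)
Your proposal is correct and follows exactly the route the paper intends: the corollary is stated as a mere reformulation of Theorem~\ref{KKL4-T5.8}, and your observation that the variational inequality is literally the defining condition $y\in\mathcal{F}_C(Ju)$ (resp.\ $y\in\mathcal{F}_C(\psi)$) is all that is needed, with the reflexivity remark and the ``either/both'' reading of the two projection equations being accurate and worthwhile clarifications.
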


\subsection{Visions in Banach spaces}
\begin{defn}\label{KKL4-D5.10} Let $X$ be a Banach space with dual $X^*$ and let $C\subset X$ be nonempty, closed, and convex.
\begin{description}
\item[(a)] We define the vision $\mathcal{F}_C^{-1}(y)$ in $X^*$ of a point $y\in C$ with respect to the background $C$ by
$$\mathcal{F}^{-1}_C(y)=\{\psi\in X^*:\ y\in \mathcal{F}_C(\psi)=\{\psi\in X^*: \langle \psi,y\rangle=\sup_{x\in C}\langle \psi,x\rangle \}.$$
\item[(b)] We define the vision $\mathcal{F}_C^{-2}(y)$ in $X$ of a point $y\in C$ with respect to the background $C$ by
$$\mathcal{F}_C^{-2}(y)=\{u\in X:\ y\in \mathcal{F}_C(Ju)\}=\{u\in X:\langle Ju,y\rangle=\sup_{x\in C}\langle Ju,x\rangle  \}$$
\end{description}
\end{defn}

\begin{lm}\label{KKL4-L5.12}  Let $X$ be a uniformly convex and uniformly smooth Banach space with dual $X^*$, and let $C$ be a nonempty, closed, and convex subset in $X$. Then, for any $y\in C,$ we have
$$\mathcal{F}_C^{-2}(y)=J^*(\mathcal{F}_C^{-1}(y))\quad \text{or}\quad \mathcal{F}_C^{-1}(y)=J(\mathcal{F}_C^{-2}(y)).$$
\end{lm}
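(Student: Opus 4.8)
The plan is to deduce both identities directly from the definitions together with the fact, recalled in Section~\ref{KL4-S2-P}, that in a uniformly convex and uniformly smooth Banach space the normalized duality map $J:X\to X^*$ is a bijection whose inverse is $J^*:X^*\to X$, i.e. $J^*J=I_X$ and $JJ^*=I_{X^*}$.

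First I would unwind Definition~\ref{KKL4-D5.10}. By that definition, $\psi\in\mathcal{F}_C^{-1}(y)$ means precisely $y\in\mathcal{F}_C(\psi)$, while $u\in\mathcal{F}_C^{-2}(y)$ means precisely $y\in\mathcal{F}_C(Ju)$. Hence, for every $u\in X$,
$$u\in\mathcal{F}_C^{-2}(y)\ \Longleftrightarrow\ y\in\mathcal{F}_C(Ju)\ \Longleftrightarrow\ Ju\in\mathcal{F}_C^{-1}(y).$$

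Second, I would translate this pointwise equivalence into a set identity. Since $J$ is a bijection with inverse $J^*$, the set $\{u\in X:\ Ju\in\mathcal{F}_C^{-1}(y)\}$ is the $J$-preimage of $\mathcal{F}_C^{-1}(y)$, which for a bijection coincides with the image $J^*\!\left(\mathcal{F}_C^{-1}(y)\right)=\{J^*\psi:\ \psi\in\mathcal{F}_C^{-1}(y)\}$. This gives the first identity $\mathcal{F}_C^{-2}(y)=J^*\!\left(\mathcal{F}_C^{-1}(y)\right)$. Applying $J$ to both sides and using $JJ^*=I_{X^*}$ yields $J\!\left(\mathcal{F}_C^{-2}(y)\right)=\mathcal{F}_C^{-1}(y)$; conversely, applying $J^*$ to the latter and using $J^*J=I_X$ returns the former. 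So the two displayed formulas in the statement are equivalent and the ``or'' should be read accordingly.

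There is essentially no hard step here: the content is bookkeeping with the definitions and the bijectivity of $J$. The only point worth a sentence of care is the passage from the pointwise equivalence to the set identity, where one must invoke both that $J$ is onto (so that every $\psi\in\mathcal{F}_C^{-1}(y)$ is of the form $Ju$ for some $u\in X$) and that $J$ is one-to-one (so that $J^*$ is a genuine two-sided inverse rather than merely a right inverse or a selection); both are guaranteed by the standing assumptions on $X$.
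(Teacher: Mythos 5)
Your proof is correct and follows essentially the same route as the paper, which simply notes that since $J$ and $J^*$ are mutually inverse bijections in a uniformly convex and uniformly smooth space, the identities are evident. You merely spell out the bookkeeping (the pointwise equivalence $u\in\mathcal{F}_C^{-2}(y)\Leftrightarrow Ju\in\mathcal{F}_C^{-1}(y)$ and the passage from preimage to image under a bijection) that the paper leaves implicit.
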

\begin{proof} Since in a uniformly convex and uniformly smooth Banach space $X$, $J$ and $J^*$ are both one-to-one and onto mapping such that $J^*J=I_X$ and $JJ^*=I_{X^*},$ the conclusions are evident.
\end{proof}

\begin{pr}\label{KKL4-P5.12}  Let $X$ be a Banach space with dual $X^*$ and let $C\subset X$ be nonempty, closed, and convex. Then, for any $y\in C,$ we have
\begin{description}
\item[(a)] $\theta^*\in \mathcal{F}_C^{-1}(y)$ and $\mathcal{F}_C^{-1}(y)\ne \emptyset.$
\item[(b)] If $\{\theta^*\} \subsetneqq \mathcal{F}_C^{-1}(y)$, then $\mathcal{F}_C^{-1}(y)$ is a closed and convex cone with vertex at $\theta^*.$
\end{description}
\end{pr}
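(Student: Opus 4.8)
Proof proposal.

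The whole statement rests on one reformulation of membership: for $\psi\in X^*$ we have $\psi\in\mathcal{F}_C^{-1}(y)$ precisely when $\langle\psi,y\rangle=\sup_{x\in C}\langle\psi,x\rangle$, and since $y\in C$ this is in turn equivalent to
$$\langle\psi,y-x\rangle\geq 0,\quad\text{for all}\ x\in C.$$
(The supremum is automatically finite whenever this inequality holds, since it then equals $\langle\psi,y\rangle$, so there is no consistency issue with the definition.) I would state and verify this equivalence first; everything else is a one-line consequence of the bilinearity and the boundedness of the duality pairing.

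For part (a), taking $\psi=\theta^*$ gives $\langle\theta^*,y-x\rangle=0\geq 0$ for every $x\in C$, so $\theta^*\in\mathcal{F}_C^{-1}(y)$; in particular the set is nonempty. For part (b), under the standing assumption that $\mathcal{F}_C^{-1}(y)$ strictly contains $\{\theta^*\}$ (which is only needed so that the set qualifies as a cone in the paper's sense, i.e.\ is not the trivial one-point set), I would check the three defining properties in order. Cone with vertex $\theta^*$: if $\psi\in\mathcal{F}_C^{-1}(y)$ and $t>0$, then $\langle t\psi,y-x\rangle=t\langle\psi,y-x\rangle\geq 0$ for all $x\in C$, so $t\psi\in\mathcal{F}_C^{-1}(y)$; combined with $\theta^*\in\mathcal{F}_C^{-1}(y)$ from (a) this gives the cone property with vertex at $\theta^*$. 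Convexity: for $\psi_1,\psi_2\in\mathcal{F}_C^{-1}(y)$ and $\lambda\in[0,1]$, $\langle\lambda\psi_1+(1-\lambda)\psi_2,y-x\rangle=\lambda\langle\psi_1,y-x\rangle+(1-\lambda)\langle\psi_2,y-x\rangle\geq 0$ for all $x\in C$.

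Closedness: if $\psi_n\in\mathcal{F}_C^{-1}(y)$ with $\|\psi_n-\psi\|_{X^*}\to 0$, then for each fixed $x\in C$ we have $|\langle\psi_n-\psi,y-x\rangle|\leq\|\psi_n-\psi\|_{X^*}\|y-x\|_X\to 0$, hence $\langle\psi,y-x\rangle=\lim_n\langle\psi_n,y-x\rangle\geq 0$; since $x\in C$ was arbitrary, $\psi\in\mathcal{F}_C^{-1}(y)$. I do not anticipate any real obstacle here: no reflexivity, smoothness, or convexity of $X$ or $X^*$ is used, and the only mild point worth a sentence is that the supremum defining $\mathcal{F}_C(\psi)$ is finite and attained exactly on the set cut out by the inequality above, so that the reformulation is genuinely an equivalence rather than just an implication.
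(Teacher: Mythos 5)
Your proof is correct and takes essentially the same route as the paper's: both simply verify directly that $\mathcal{F}_C^{-1}(y)$ contains $\theta^*$ and is a cone with vertex $\theta^*$, convex, and closed, using only linearity and continuity of the duality pairing. Your preliminary reformulation of membership as $\langle\psi,y-x\rangle\geq 0$ for all $x\in C$ (legitimate precisely because $y\in C$) is just a restatement of the paper's supremum condition, and if anything it streamlines the convexity and closedness steps, which the paper carries out by manipulating the suprema directly.
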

\begin{proof}  Since (a) is evident, we only prove (b). For any $\psi\in \mathcal{F}_C^{-1}(y)$ and $t\geq 0$, we have
$$\langle t\psi,y\rangle=t\sup_{x\in C}\langle \psi,x \rangle=\sup_{x\in C}\langle t\psi,x\rangle,$$
which  implies that $t\psi\in \mathcal{F}_C^{-1}(y)$, and hence $\mathcal{F}_C^{-1}(Y)$ is a cone with vertex at $\theta^*$ in $X^*.$

For any $\psi,\phi\in \mathcal{F}_C^{-1}(y)$ and for any $t \in [0,1]$ by $y\in C$, we have
\begin{align*}
\sup_{x\in C}\langle t\psi+(1-t)\phi ,x\rangle&\geq \langle t \psi+(1-t)\phi,y\rangle =t \langle \psi,y\rangle+(1-t)\langle \phi,y\rangle=t \sup_{x\in C}\langle \psi,x\rangle+(1-t)\sup_{x\in C}\langle \phi,x\rangle\\
& = \sup_{x\in C}\langle t\psi,x\rangle+\sup_{x\in C}\langle (1-t)\phi,x\rangle\geq \sup_{x\in C}\langle t\psi+(1-t)\phi,x\rangle,
\end{align*}
which implies that
$$\langle t\psi+(1-t)\phi,y\rangle=\sup_{x\in C}\langle t\psi+(1-t)\phi,x\rangle$$
and hence $t\psi+(1-t)\phi\in \mathcal{F}_C^{-1}(y)$, proving the desired convexity. To prove that $\mathcal{F}_C^{-1}(y)$ is closed in $X^*$. Let $\{\psi_n\}\subseteq \mathcal{F}_C^{-1}(y)$ and $\psi\in X^*$ be such that $\psi_n\to \psi$ in $X^*$ as $n\to \infty. $ This implies that
$$\langle \psi,y\rangle=\lim_{n\to \infty}\langle \psi_n,y\rangle\geq \lim_{n\to \infty}\langle \psi_n,x\rangle=\langle \psi,x\rangle,  $$
which proves that  $\psi \in \mathcal{F}_C^{-1}(y)$, and hence $\mathcal{F}_C^{-1}(y)$ is closed in $X^*$.
\end{proof}

\begin{pr}\label{KKL4-P5.13}  Let $X$ be a Banach space with dual space $X^*$ and let  $C$ be a nonempty, closed, and convex subset in $X$. Then for any $y\in C$, we have
\begin{description}
\item[(a)] $\theta in \mathcal{F}_C^{-2}(y)$ and $\mathcal{F}_C^{-2}(y)\ne \emptyset.$
\item[(b)] If $\mathcal{F}_C^{-2}\varsupsetneq \{\theta\}$, then $\mathcal{F}_C^{-2}(y)$ is a closed cone with vertex at $\theta$ in $X$. In general $\mathcal{F}_C^{-2}(y)$ is not convex.
\end{description}
\end{pr}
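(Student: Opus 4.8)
The plan is to follow the template of Proposition~\ref{KKL4-P5.12}, exploiting that the normalized duality map $J$ is positively homogeneous and (in the smooth, strictly convex framework in which $\mathcal{F}_C^{-2}$ is meaningful) continuous, and then to dispose of the non-convexity assertion by recycling the counterexample already built in Theorem~\ref{KKL4-T3.2}(d). For part (a) I would simply note that $J\theta=\theta^*$, so $\mathcal{F}_C(J\theta)=\mathcal{F}_C(\theta^*)=C$ by Remark~\ref{KKL4-L5.3}; since $y\in C$ this gives $y\in\mathcal{F}_C(J\theta)$, i.e.\ $\theta\in\mathcal{F}_C^{-2}(y)$, and in particular $\mathcal{F}_C^{-2}(y)\ne\emptyset$.

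For the cone property in part (b), take $u\in\mathcal{F}_C^{-2}(y)$ and $t>0$; homogeneity of $J$ gives $J(tu)=tJu$, hence $\langle J(tu),y\rangle=t\langle Ju,y\rangle=t\sup_{x\in C}\langle Ju,x\rangle=\sup_{x\in C}\langle J(tu),x\rangle$, so $tu\in\mathcal{F}_C^{-2}(y)$, while $t=0$ is covered by part (a); thus $\mathcal{F}_C^{-2}(y)$ is a cone with vertex at $\theta$. For closedness, given $u_n\to u$ with $u_n\in\mathcal{F}_C^{-2}(y)$, continuity of $J$ yields $Ju_n\to Ju$ in $X^*$, so letting $n\to\infty$ in $\langle Ju_n,y\rangle\ge\langle Ju_n,x\rangle$ ($x\in C$) gives $\langle Ju,y\rangle\ge\langle Ju,x\rangle$ for all $x\in C$; since $y\in C$, this forces $\langle Ju,y\rangle=\sup_{x\in C}\langle Ju,x\rangle$, i.e.\ $u\in\mathcal{F}_C^{-2}(y)$. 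An alternative for this step is to invoke Lemma~\ref{KKL4-L5.12}, namely $\mathcal{F}_C^{-2}(y)=J^*\bigl(\mathcal{F}_C^{-1}(y)\bigr)$, together with the closedness of $\mathcal{F}_C^{-1}(y)$ from Proposition~\ref{KKL4-P5.12}(b) (noting that $\mathcal{F}_C^{-2}(y)\varsupsetneq\{\theta\}$ forces $\mathcal{F}_C^{-1}(y)\varsupsetneq\{\theta^*\}$) and the fact that $J^*$ is a homeomorphism.

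For the failure of convexity I would reuse the data of Theorem~\ref{KKL4-T3.2}(d): in $X=\mathds{R}^3$ with the $3$-norm, put $u=(-25,-37,-77)$, $C=K=[\theta,u\lceil$, and $y=\theta\in C$. Since $\theta\in K$, one has $\sup_{z\in K}\langle Jw,z\rangle=\sup_{\alpha\ge0}\alpha\langle Jw,u\rangle$, which equals $0$ exactly when $\langle Jw,u\rangle\le0$; hence $\mathcal{F}_K^{-2}(\theta)=\{w\in X:\ \langle Jw,u\rangle\le0\}=K_P^{\perp}$. The computations already performed in the proof of Theorem~\ref{KKL4-T3.2}(d) show that $(3,-2,-1)$ and $(1,-3,2)$ lie in this set (both give $\langle J\cdot,u\rangle=0$) while their convex combination $\tfrac23(3,-2,-1)+\tfrac13(1,-3,2)=(\tfrac73,-\tfrac73,0)$ does not (there $\langle J\cdot,u\rangle=14\sqrt[3]{4}>0$); this simultaneously verifies $\mathcal{F}_K^{-2}(\theta)\varsupsetneq\{\theta\}$, so the hypothesis of part (b) is in force, and that $\mathcal{F}_K^{-2}(\theta)$ is not convex.

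The only genuinely delicate point is the closedness claim, since in an arbitrary Banach space $J$ is set-valued and need not be norm-to-norm continuous; the argument therefore tacitly uses the uniformly convex and uniformly smooth setting of this section under which $\mathcal{F}_C^{-2}$ was introduced, and I would make that hypothesis explicit. Everything else is a direct transcription of the $\mathcal{F}_C^{-1}$-arguments of Proposition~\ref{KKL4-P5.12} together with the already-verified $\mathds{R}^3$ example.
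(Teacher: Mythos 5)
Your proposal is correct and follows essentially the same route as the paper: the paper's proof also treats (a) and the cone/closedness assertions as routine and devotes itself entirely to non-convexity via the same $\mathds{R}^3$ example with the $3$-norm, using the very same vectors $(3,-2,-1)$, $(1,-3,2)$ and their combination $\left(\tfrac73,-\tfrac73,0\right)$. The only cosmetic difference is that the paper takes $y=(25,37,77)$ with $C=[\theta,y]$ (vertex at the far endpoint) rather than your $y=\theta$ with the ray $K=[\theta,u\lceil$ recycled from Theorem~\ref{KKL4-T3.2}(d); both computations check out, and your explicit caveat that closedness requires the uniformly convex and uniformly smooth setting (so that $J$ is single-valued and continuous) is a fair reading of the paper's standing assumptions.
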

\begin{proof} We only prove that $\mathcal{F}_C^{-2}(y)$ is not convex. Let $X=\mathds{R}^3$ be as in Example~\ref{KL4-Ex2.1}. Let $y=(25,37,77)$ and define
$C=[\theta,y]=\{\alpha y:\ 0\leq \alpha \leq 1\}$. We take $x=(3,-2,-1)$ and $z=(1,-3,2)$. Then $\|x\|_3=\|z\|_3=\sqrt[3]{36}.$ As before, we compute
$$\langle Jx,y-\alpha y\rangle=0,\quad \text{for every}\ \alpha y\in C,\ \alpha\in [0,1].$$
Therefore, $x\in \mathcal{F}_C^{-2}(y)$. Analogously, we have $z\in \mathcal{F}_C^{-2}(y)$. We take $h=\frac23 x+\frac13 z=\left(\frac73,-\frac73,0\right).$ Proceeding as before, we have
$$\langle Jh,y-\alpha y\rangle<0,\quad \text{for every}\ \alpha y\in C,\ \alpha\in [0,1),$$
proving that $h\notin \mathcal{F}_C^{-2}(y).$ Thus, $\mathcal{F}_C^{-2}(y)$ is not convex which proves the assertion.
\end{proof}

\begin{defn}\label{KKL4-D5.14} Let $X$ be a Banach space with dual $X^*$ and let $C$ be a nonempty, closed, and convex set in $X$. For any $y\in C,$ we define
\begin{description}
\item[(a)] If $\mathcal{F}_C^{-1}(y)=\{\theta^*\}$, then $y$ is called an internal point of $C$.
\item[(b)] If $\mathcal{F}_C^{-1}(y)\supsetneqq \{\theta^*\}$, then $y$ is called a cuticle point of $C$.
\end{description}
The collection of all internal points of $C$ is denoted by $\mathcal{J}(C) $and the collection of all cuticle points of $C$ is denoted by $\mathcal{C}(C)$
\end{defn}

As a direct consequence of Proposition~\ref{KKL4-P5.12}, we have the following result.
\begin{co}\label{KKL4-C5.15} Let $X$ be a Banach space with dual $X^*$ and let $C$ be a nonempty, closed, and convex set in $X$. Then $C=\{\mathcal{J}(C),\mathcal{C}(C)\}$ is a partition of $C$. More precisely, we have $C=\mathcal{J}(C)\cup \mathcal{C}(C).$
\end{co}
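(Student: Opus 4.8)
The plan is to read the statement off directly from the already‑established fact that $\theta^*$ belongs to every vision set. First I would invoke Proposition~\ref{KKL4-P5.12}(a): for each $y\in C$ the set $\mathcal{F}_C^{-1}(y)$ is nonempty and satisfies $\theta^*\in\mathcal{F}_C^{-1}(y)$, hence $\{\theta^*\}\subseteq\mathcal{F}_C^{-1}(y)$. This containment is in any case immediate from Definition~\ref{KKL4-D5.10}, since $\langle\theta^*,y\rangle=0=\sup_{x\in C}\langle\theta^*,x\rangle$, so no extra work is needed to secure it.

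Next, fixing $y\in C$, I would split into the only two possibilities compatible with $\{\theta^*\}\subseteq\mathcal{F}_C^{-1}(y)$: either $\mathcal{F}_C^{-1}(y)=\{\theta^*\}$, in which case $y\in\mathcal{J}(C)$ by Definition~\ref{KKL4-D5.14}(a), or the inclusion is proper, i.e.\ $\{\theta^*\}\subsetneqq\mathcal{F}_C^{-1}(y)$, in which case $y\in\mathcal{C}(C)$ by Definition~\ref{KKL4-D5.14}(b). These two alternatives are mutually exclusive, because a set cannot simultaneously equal $\{\theta^*\}$ and properly contain it; and they are exhaustive, being the dichotomy ``equality versus proper inclusion'' applied to the containment from the previous step. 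Consequently $\mathcal{J}(C)\cap\mathcal{C}(C)=\emptyset$ and $\mathcal{J}(C)\cup\mathcal{C}(C)=C$, which is exactly the asserted decomposition $C=\mathcal{J}(C)\cup\mathcal{C}(C)$.

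There is essentially no obstacle here; the argument is a pure set‑theoretic dichotomy resting on Proposition~\ref{KKL4-P5.12}. The only point worth flagging is interpretive rather than mathematical: ``partition'' is used in the loose sense made precise by the ``more precisely'' clause, namely a disjoint union, since one of the two blocks may be empty — for instance, when $C$ is a singleton $\{y\}$ one has $\mathcal{F}_C^{-1}(y)=X^*$, so $\mathcal{C}(C)=C$ and $\mathcal{J}(C)=\emptyset$. I would simply record this remark so that no claim of both blocks being nonempty is inadvertently made, and then conclude.
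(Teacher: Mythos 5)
Your argument is correct and is exactly the paper's route: the paper states this corollary as a direct consequence of Proposition~\ref{KKL4-P5.12}, and your dichotomy (for each $y\in C$, either $\mathcal{F}_C^{-1}(y)=\{\theta^*\}$ or $\{\theta^*\}\subsetneqq\mathcal{F}_C^{-1}(y)$, matching Definition~\ref{KKL4-D5.14}) is precisely that deduction. Your closing remark that one of the two classes may be empty is a reasonable clarification and does not conflict with the statement.
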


\begin{co}\label{KKL4-C5.16} Let $X$ be a uniformly convex and uniformly smooth Banach space with dual $X^*$ and let $C$ be a nonempty, closed, and convex set in $X$. For any $y\in C,$ we have
\begin{description}
\item[(a)] $y\in \mathcal{J}(C)$ if and only if for $\psi\in X^*$, $y=\pi_C(\psi+Jy)$ implies that $\psi=\theta^*.$
\item[(b)] $y\in \mathcal{C}(C)$ if and only if there is $\theta^*\ne \psi\in X^*$ such that $y=\pi_C(\psi+Jy)$.
\end{description}
\end{co}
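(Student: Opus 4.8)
The plan is to obtain both equivalences by simply unwinding the definition of the vision $\mathcal{F}_C^{-1}(y)$ together with the characterization of faces already proved. First I would invoke the second equality in part (b) of Theorem~\ref{KKL4-T5.8}: for every $\psi\in X^*$ and every $y\in C$ one has $y\in\mathcal{F}_C(\psi)$ if and only if $y=\pi_C(\psi+Jy)$. Inserting this into Definition~\ref{KKL4-D5.10}(a) yields the working description
$$\mathcal{F}_C^{-1}(y)=\{\psi\in X^*:\ y=\pi_C(\psi+Jy)\}.$$
Thus the containment $\psi\in\mathcal{F}_C^{-1}(y)$ is literally the solvability statement $y=\pi_C(\psi+Jy)$, and everything reduces to comparing this with Definition~\ref{KKL4-D5.14}.

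Next I would translate the two cases. For (a): by Definition~\ref{KKL4-D5.14}(a), $y\in\mathcal{J}(C)$ means $\mathcal{F}_C^{-1}(y)=\{\theta^*\}$; since $\theta^*\in\mathcal{F}_C^{-1}(y)$ always holds (Proposition~\ref{KKL4-P5.12}(a), or directly because $\langle Jy-Jy,y-z\rangle=0\ge 0$ for all $z\in C$ gives $y=\pi_C(Jy)=\pi_C(\theta^*+Jy)$ via~\eqref{KL3-S2.4-GP-E2.5}), this equality is equivalent to the implication ``$\psi\in X^*$ and $y=\pi_C(\psi+Jy)$ force $\psi=\theta^*$,'' which is exactly the assertion. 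For (b): by Definition~\ref{KKL4-D5.14}(b), $y\in\mathcal{C}(C)$ means $\{\theta^*\}\subsetneqq\mathcal{F}_C^{-1}(y)$, i.e. there exists $\psi\in\mathcal{F}_C^{-1}(y)$ with $\psi\ne\theta^*$; by the description above this is precisely the existence of $\theta^*\ne\psi\in X^*$ with $y=\pi_C(\psi+Jy)$. Note that (a) and (b) are also immediately complementary, consistent with the partition in Corollary~\ref{KKL4-C5.15}.

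There is essentially no obstacle here: the result is a bookkeeping consequence of Theorem~\ref{KKL4-T5.8} and the definitions, and the only point deserving a word of care is the remark that $\theta^*$ always belongs to $\mathcal{F}_C^{-1}(y)$, which is what makes the internal/cuticle dichotomy match the ``$\psi=\theta^*$ versus $\psi\ne\theta^*$'' dichotomy verbatim. I would carry out the argument with $\pi_C$ as above to match the stated form; one could equally well run it through the metric-projection form $y=P_C(J^*\psi+y)$ from Theorem~\ref{KKL4-T5.8}(b), which I would mention only as an aside.
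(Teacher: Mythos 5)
Your argument is correct and is exactly the route the paper implicitly relies on (it states Corollary~\ref{KKL4-C5.16} without proof as a direct consequence): combine the characterization $\mathcal{F}_C(\psi)=\{y\in C:\ y=\pi_C(\psi+Jy)\}$ from Theorem~\ref{KKL4-T5.8}(b) with Definition~\ref{KKL4-D5.14} and the fact that $\theta^*\in\mathcal{F}_C^{-1}(y)$ always holds. Nothing is missing, and your side remark that the same bookkeeping with $y=P_C(J^*\psi+y)$ yields Corollary~\ref{KKL4-C5.17} matches the paper's intent.
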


An analogue of the above result can be given by using the metric projection $P_C$.
\begin{co}\label{KKL4-C5.17} Let $X$ be a uniformly convex and uniformly smooth Banach space with dual $X^*$ and let $C$ be a nonempty, closed, and convex set in $X$. For any $y\in C,$ we have
\begin{description}
\item[(a)] $y\in \mathcal{J}(C)$ if and only if for $\psi\in X^*$, $y=P_C(J^*\psi+y)$ implies that $\psi=\theta^*.$
\item[(b)] $y\in \mathcal{C}(C)$ if and only if there is $\theta^*\ne \psi\in X^*$ such that $y=P_C(J^*\psi+y)$.
\end{description}
\end{co}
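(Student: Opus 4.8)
The plan is to deduce both equivalences directly from the projection characterization of faces established in Theorem~\ref{KKL4-T5.8}(b) together with Definitions~\ref{KKL4-D5.10} and \ref{KKL4-D5.14}. The only structural input needed is that, in a uniformly convex and uniformly smooth Banach space, $J^*$ is a well-defined single-valued map with $JJ^*=I_{X^*}$; this is precisely the hypothesis under which Theorem~\ref{KKL4-T5.8}(b) applies, so no extra work is required here.

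First I would record the translation. By Theorem~\ref{KKL4-T5.8}(b), for every $\psi\in X^*$ one has $y\in\mathcal{F}_C(\psi)$ if and only if $y=P_C(J^*\psi+y)$. Unwinding Definition~\ref{KKL4-D5.10}(a), this yields the description
\[
\mathcal{F}_C^{-1}(y)=\{\psi\in X^*:\ y=P_C(J^*\psi+y)\}.
\]
Note that $\theta^*$ always belongs to this set, since $J^*\theta^*=\theta$ and $P_C(\theta+y)=P_C(y)=y$ because $y\in C$; this is consistent with Proposition~\ref{KKL4-P5.12}(a) and explains why $\psi=\theta^*$ is the ``trivial'' solution.

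For part (a), by Definition~\ref{KKL4-D5.14}(a) the point $y$ is internal precisely when $\mathcal{F}_C^{-1}(y)=\{\theta^*\}$. In view of the displayed description of $\mathcal{F}_C^{-1}(y)$, this holds if and only if the equation $y=P_C(J^*\psi+y)$ forces $\psi=\theta^*$, which is the claimed statement. For part (b), $y$ is a cuticle point exactly when $\mathcal{F}_C^{-1}(y)\supsetneqq\{\theta^*\}$, that is, when there exists $\psi\neq\theta^*$ with $\psi\in\mathcal{F}_C^{-1}(y)$, which by the same description means there is $\theta^*\neq\psi\in X^*$ with $y=P_C(J^*\psi+y)$.

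Since the argument is a direct unwinding of earlier results, I do not anticipate a genuine obstacle; the only point requiring care is to ensure the always-present solution $\psi=\theta^*$ is not mistakenly counted as a nontrivial witness in part (b), and to invoke the uniform convexity and uniform smoothness of $X$ so that Theorem~\ref{KKL4-T5.8}(b) is available. Alternatively, one can simply note that Corollary~\ref{KKL4-C5.17} is obtained from Corollary~\ref{KKL4-C5.16} by replacing $\pi_C(\psi+Jy)$ with $P_C(J^*\psi+y)$ throughout, since Theorem~\ref{KKL4-T5.8}(b) shows that both expressions characterize membership of $y$ in $\mathcal{F}_C(\psi)$.
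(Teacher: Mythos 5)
Your argument is correct and is exactly the route the paper intends: Corollary~\ref{KKL4-C5.17} is stated there as the metric-projection analogue of Corollary~\ref{KKL4-C5.16}, obtained by combining Theorem~\ref{KKL4-T5.8}(b) with Definitions~\ref{KKL4-D5.10} and \ref{KKL4-D5.14}, which is precisely your unwinding of $\mathcal{F}_C^{-1}(y)=\{\psi\in X^*:\ y=P_C(J^*\psi+y)\}$. Your additional remark that $\theta^*$ is always the trivial solution (so it cannot serve as the witness in part (b)) is a useful point of care, consistent with Proposition~\ref{KKL4-P5.12}(a).
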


Next we give some examples to demonstrate the concepts of $\mathcal{J}(C)$ and $\mathcal{C}(C).$
\begin{co}\label{KKL4-C5.18} Let $X$ be Banach space with dual $X^*$ and let $C$ be a proper closed subspace of $X$. Then:
\begin{description}
\item[(a)] $\mathcal{J}(C)=\emptyset.$
\item[(a)] $\mathcal{C}(C)=C.$
\end{description}
\end{co}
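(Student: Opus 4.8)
The plan is to reduce everything to a single elementary observation about linear functionals on a subspace. By Corollary~\ref{KKL4-C5.15}, the sets $\mathcal{J}(C)$ and $\mathcal{C}(C)$ partition $C$, so it suffices to establish (b), namely that every $y\in C$ is a cuticle point; statement (a) then follows automatically. Thus the whole task is to show $\{\theta^*\}\subsetneqq\mathcal{F}_C^{-1}(y)$ for each $y\in C$.

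First I would record the trivial dichotomy satisfied by $\mathcal{F}_C(\psi)$ when $C$ is a subspace. If $\psi\in X^*$ does not vanish identically on $C$, pick $x\in C$ with $\langle\psi,x\rangle>0$; then $\langle\psi,tx\rangle\to+\infty$ as $t\to+\infty$, so $\sup_{x\in C}\langle\psi,x\rangle=+\infty$ and $\mathcal{F}_C(\psi)=\emptyset$. If instead $\psi$ vanishes on $C$, then $\langle\psi,x\rangle=0$ for all $x\in C$, the supremum equals $0$ and is attained at every point, so $\mathcal{F}_C(\psi)=C$. Consequently, for every $y\in C$ we get the $y$-independent identity
$$\mathcal{F}_C^{-1}(y)=C^{\perp}:=\{\psi\in X^*:\ \langle\psi,x\rangle=0\ \text{for all}\ x\in C\},$$
the annihilator of $C$ in $X^*$.

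Next I would invoke the Hahn--Banach theorem. Since $C$ is a \emph{proper, closed} subspace of $X$, choose $x_0\in X\setminus C$; by the Hahn--Banach separation theorem there exists $\psi_0\in X^*$ with $\psi_0\ne\theta^*$ and $\psi_0|_C=0$, i.e.\ $C^{\perp}\ne\{\theta^*\}$. Note also that $\theta^*\in C^{\perp}$ always (consistent with Proposition~\ref{KKL4-P5.12}(a)). Combining this with the identity of the previous paragraph, $\mathcal{F}_C^{-1}(y)=C^{\perp}\supsetneqq\{\theta^*\}$ for every $y\in C$, so by Definition~\ref{KKL4-D5.14} every $y\in C$ is a cuticle point. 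Hence $\mathcal{C}(C)=C$, and by Corollary~\ref{KKL4-C5.15}, $\mathcal{J}(C)=\emptyset$.

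There is essentially no obstacle in this argument; the only point demanding attention is that closedness of $C$ is used exactly once and is indispensable. If $C$ were merely a proper but \emph{dense} subspace, its annihilator would be trivial, $\mathcal{F}_C^{-1}(y)$ would reduce to $\{\theta^*\}$, and every point would instead be internal, reversing the conclusion. I would flag this with a short remark after the proof.
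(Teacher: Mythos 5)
Your proof is correct and follows essentially the same route as the paper: both invoke the Hahn--Banach theorem to produce a nonzero functional vanishing on the proper closed subspace $C$, observe that it lies in $\mathcal{F}_C^{-1}(y)$ for every $y\in C$, and conclude via Corollary~\ref{KKL4-C5.15}. Your extra identification $\mathcal{F}_C^{-1}(y)=C^{\perp}$ (and the remark on dense subspaces) is a harmless refinement beyond what the paper records, but not a different argument.
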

\begin{proof} Since $C$ is a proper closed subspace of $X$, by the Hahn-Banach space theorem, there is $\psi\in X^*$ with $\|\psi\|_{X^*}=1$ such that $\langle \psi,x\rangle=0 $ for all $x\in C.$ This implies that $\psi\in \mathcal{F}_C^{-1}(y)$ for all $y\in C.$ Since $\psi\ne \theta^*$, it follows at once that $y\in \mathcal{C}(C)$, for all $y\in C$. The claim then follows from Corollary~\ref{KKL4-C5.15}.
\end{proof}

In the following result, we use the closed and open balls and the unit sphere, see Section~\ref{KL4-S2-P}. 
\begin{pr}\label{KKL4-P5.19} Let $X$ be Banach space with dual $X^*$. For $r>0$, we have
\begin{description}
\item[(a)] $\mathcal{J}(B(r))=B^0(r).$
\item[(b)] $\mathcal{C}(B(r))=S(r).$
\item[(c)] For any $y\in S(r)$, $\mathcal{F}_{B(r)}^{-1}(y)$ is a closed and convex cone with vertex at $\theta^*$ and
$$\mathcal{F}_{B(r)}^{-1}(y)=\cup_{\psi\in Jy}[\theta^*,\psi\lceil.$$
\item[(d)] If $X^*$ is strictly convex, then for any $y\in S(r)$, we have
$$\mathcal{F}_{B(r)}^{-1}(y)=[\theta^*,Jy\lceil.$$
\end{description}
\end{pr}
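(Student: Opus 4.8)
The proposition bundles four assertions about the ball $B(r)$, and the natural strategy is to prove them in a chain, deriving (a), (b) from (c)/(d) where convenient. The unifying computation is this: for $y$ with $\|y\|_X=r$ and $\psi\in X^*$, one has $\langle\psi,y\rangle=\sup_{x\in B(r)}\langle\psi,x\rangle=r\|\psi\|_{X^*}$ if and only if $\psi$ attains its norm at $y/r$, i.e. $\langle\psi,y\rangle=\|\psi\|_{X^*}\|y\|_X$, which by definition of the (set-valued) normalized duality map means $\psi\in\frac{\|\psi\|_{X^*}}{r}J(y)$ after the appropriate scaling — more precisely $\psi\in Jy$ when $\|\psi\|_{X^*}=r$, and $\psi\in[\theta^*,\varphi\lceil$ for some $\varphi\in Jy$ in general. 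First I would record this characterization of $\mathcal{F}_{B(r)}^{-1}(y)$ for $y\in S(r)$ carefully from $\sup_{\|x\|_X\le r}\langle\psi,x\rangle=r\|\psi\|_{X^*}$.

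For part (c), once the characterization ``$\psi$ attains its norm at $y$'' is in hand, I would argue both inclusions of $\mathcal{F}_{B(r)}^{-1}(y)=\cup_{\psi\in Jy}[\theta^*,\psi\lceil$. If $\ell\in\mathcal{F}_{B(r)}^{-1}(y)$ with $\ell\ne\theta^*$, set $t=\|\ell\|_{X^*}/r>0$; then $\varphi:=\ell/t$ has $\|\varphi\|_{X^*}=r$ and still attains its norm at $y$ (scaling preserves the equality $\langle\varphi,y\rangle=\|\varphi\|_{X^*}\|y\|_X$), so $\varphi\in Jy$ and $\ell=t\varphi\in[\theta^*,\varphi\lceil$; the reverse inclusion is the easy homogeneity direction already packaged in Proposition~\ref{KKL4-P5.12}(b). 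The closed-and-convex-cone claim is then immediate from Proposition~\ref{KKL4-P5.12}(b) provided $\mathcal{F}_{B(r)}^{-1}(y)\supsetneq\{\theta^*\}$, which holds because $Jy\ne\emptyset$ and $y\ne\theta$ forces any $\varphi\in Jy$ to be nonzero. Part (d) is the specialization: when $X^*$ is strictly convex the duality map is single-valued, so $Jy$ is a single functional and the union collapses to $[\theta^*,Jy\lceil$.

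For (a) and (b): by Definition~\ref{KKL4-D5.14} and Corollary~\ref{KKL4-C5.15} it suffices to show $y\in S(r)\Rightarrow y\in\mathcal{C}(B(r))$ and $\|y\|_X<r\Rightarrow y\in\mathcal{J}(B(r))$. The first follows from (c): for $y\in S(r)$, pick any $\varphi\in Jy$; then $\varphi\ne\theta^*$ lies in $\mathcal{F}_{B(r)}^{-1}(y)$, so $\mathcal{F}_{B(r)}^{-1}(y)\supsetneq\{\theta^*\}$. For the interior points, if $\|y\|_X<r$ and $\psi\ne\theta^*$, then one can move from $y$ a little in the direction of a vector $x$ with $\langle\psi,x\rangle>\langle\psi,y\rangle$ while staying in $B(r)$ (choose $x=y+\varepsilon z$ with $z$ chosen so that $\langle\psi,z\rangle>0$, permissible since $\psi\ne\theta^*$, and $\varepsilon$ small enough that $\|x\|_X\le r$), so $y\notin\mathcal{F}_{B(r)}(\psi)$; hence $\mathcal{F}_{B(r)}^{-1}(y)=\{\theta^*\}$ and $y\in\mathcal{J}(B(r))$.

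**Expected main obstacle.** Nothing here is deep; the one point demanding care is the norm-attainment lemma and its scaling behavior — specifically that $\sup_{x\in B(r)}\langle\psi,x\rangle=r\|\psi\|_{X^*}$ and that the equality case is exactly $\psi\in Jy$ when $\|\psi\|_{X^*}=r$ — because the normalized duality map is in general set-valued and the bookkeeping between ``attains norm at $y$'', ``attains norm at $y/r$'', and the scaling factor $t=\|\psi\|_{X^*}/r$ must be kept straight. Part (d) additionally needs the remark from Section~\ref{KL4-S2-P} that strict convexity of $X^*$ makes $J$ single-valued; I would cite that explicitly rather than reprove it.
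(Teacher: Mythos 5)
Your proof is correct, and its overall architecture matches the paper's: the easy inclusion in (c) via homogeneity, the hard inclusion via the norm-attainment characterization, (b) from the fact that any $\varphi\in Jy$ is a nonzero element of $\mathcal{F}_{B(r)}^{-1}(y)$, (a) by combining an interior-point argument with (b) and the partition of Corollary~\ref{KKL4-C5.15}, the cone statement from Proposition~\ref{KKL4-P5.12}, and (d) from single-valuedness of $J$ when $X^*$ is strictly convex. Two local steps differ, and in both your version is the more robust one. For the converse inclusion in (c), the paper attains the supremum with the element $J^*\psi$ and computes $r^2\geq\langle\psi,y\rangle\geq\langle\psi,J^*\psi\rangle=\|\psi\|_{X^*}^2=r^2$; since the proposition is stated for an arbitrary Banach space, $J^*\psi$ lives a priori in $X^{**}$, so that step tacitly needs reflexivity (or extra care), whereas your argument uses only the dual-norm identity $\sup_{x\in B(r)}\langle\psi,x\rangle=r\|\psi\|_{X^*}$, which requires no attainment, and then rescales $\ell$ to $\varphi=r\ell/\|\ell\|_{X^*}$ and checks $\|\varphi\|_{X^*}=\|y\|_X=r$ together with $\langle\varphi,y\rangle=r^2$, which is exactly membership in $Jy$. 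For (a), the paper treats $y=\theta$ separately and then splits $0<\|y\|_X<r$ into the cases $\langle\psi,y\rangle=0$ and $\langle\psi,y\rangle\neq 0$, using the scaled points $ty$ and $sy$; your single perturbation $y+\varepsilon z$ with $\langle\psi,z\rangle>0$ and $\varepsilon$ small enough that $\|y+\varepsilon z\|_X\leq r$ handles all of $B^0(r)$ at once. The scaling bookkeeping you flag as the main obstacle is indeed the only delicate point, and your normalization $t=\|\ell\|_{X^*}/r$ resolves it correctly.
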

\begin{proof} (a) We first prove that $\theta \in \mathcal{J}(B(r))$. For any $\psi\in X^*$ with $\|\psi\|_{X^*}>0$, there is $x\in B(r)$ such that $\langle \psi,x\rangle\ne 0.$ Since for $x\in B(r)$, we also have $-x\in B(r)$, it follows that one of $\langle \psi, x\rangle $ and $\langle \psi, -x\rangle $ is positive. By $\langle \psi,\theta\rangle=0, $ it follows that
$$\psi\notin \mathcal{F}_{B(r)}^{-1}(\theta),\quad \text{for any}\ \psi\in X^*\ \text{with}\ \|\psi\|_{X^*}\ne \emptyset.$$
This implies $\mathcal{F}_{B(r)}^{-1}(\theta)=\{\theta^*\}$, and therefore $\theta \in \mathcal{J}(B(r)).$

For any $y\in B^0(r)$ with $0<\|y\|_X<r$, the proof of $y\in \mathcal{J}(B(r))$ is divided into two parts.\\
\textbf{Case 1.} $\psi\in X^*$ with $\|\psi\|_{X^*}>0$ satisfying $\langle \psi,y\rangle=0.$ Then, there is $z\in B(r)$ and $-z\in B(r)$ such that $\langle \psi,z\rangle\ne 0.$ It follows that one of $\langle \psi, x\rangle $ and $\langle \psi, -x\rangle $ is positive. Then,
\begin{equation}\label{KKL4-E5.9}
\psi\notin \mathcal{F}_{B(r)}^{-1}(y)\quad \text{for any}\ \psi\in X^*\ \text{with}\ \|\psi\|_{X^*}\ne 0\ \text{any}\ \langle \psi,y\rangle=0.
\end{equation}
\textbf{Case 2.} $\psi\in X^*$ with $\|\psi\|_{X^*}>0$ satisfying $\langle \psi,y\rangle\ne 0.$ By $0<\|y\|_X<r$, there are positive numbers $s$ and $t$ with $t>1>s>0$ such that $\|ty\|_X<r$. Then, $ty,ts\in B^0(r)\subset B(r)$. We have
$$\max\{\langle \psi,ty\rangle,\langle \psi,sy\rangle \}>\langle \psi,y\rangle.$$
By $ty,ts\in B(r)$, we deduce that
\begin{equation}\label{KKL4-E5.10}
\psi\notin \mathcal{F}_{B(r)}^{-1}(y),\quad \text{for any}\ \psi\in X^*\ \text{with}\ \|\psi\|_{X^*}\ne 0,\ \text{and}\ \langle \psi,y\rangle\ne 0.
\end{equation}
Combining \eqref{KKL4-E5.9} and \eqref{KKL4-E5.10}, for any $y\in B^0(r)$ with $0<\|y\|_X<r$, we have
$$\psi\notin \mathcal{F}_{B(r)}^{-1}(y),\quad \text{for any}\ \psi\in X^*\ \text{with}\ \|\psi\|_{X^*}\ne 0.$$
which implies that
$$y\in \mathcal{J}(B(r)),\quad \text{for any}\ y\in B^0(r)\ \text{with}\ \|y\|_X>0,$$
which, when combined with the containment $\theta\in  \mathcal{J}(B(r))$ proves (a).

(b) For any $y\in S(r)$ and for any $\psi \in Jy\subseteq X^*$, we have  $\|\psi\|_{X^*}=\|y\|_X=r$ and $\langle \psi ,y\rangle=r^2$. Then,
$$\langle \psi,x\rangle\leq \|\psi\|_{X^*}\|x\|_X\leq r^2=\langle \psi,y\rangle,\quad \text{for any }\ x\in B(r),$$
which implies
\begin{equation}\label{KKL4-E5.12}
\psi\in \mathcal{F}_{B(r)}^{-1}(y),\quad \text{for any}\ y\in S(r).
\end{equation}
Since $\psi\in X^*$ with $\psi\ne \theta $, we have $y\in \mathcal{C}(B(r))$, for any $y\in S(r)$. This, taking into account Remark~\ref{KKL4-L5.3} , implies that $\mathcal{C}(B(r))=S(r).$

(c). From \eqref{KKL4-E5.12}, we have
\begin{equation}\label{KKL4-E5.13}
Jy=\{\psi\in Jy\}\subseteq \mathcal{F}_{B(r)}^{-1}(y),\quad \text{for any}\ y\in S(R).
\end{equation}
Next we show that for any fixed $y\in S(r),$ we have
\begin{equation}\label{KKL4-E5.14}
[\theta^*,\psi \lceil \subseteq \mathcal{F}_{B(r)}^{-1}(y),\quad \text{for any}\ \psi\in Jy.
\end{equation}
From \eqref{KKL4-E5.12}, for any $t\geq 0,$ we have
\begin{equation}\label{KKL4-E5.15}
\langle t\psi, y\rangle=t\langle \psi,y\rangle \geq t\langle \psi,x\rangle=\langle t\psi,x\rangle,\quad \text{for all}\ x\in B(r),
\end{equation}
which implies that $t\psi\in \mathcal{F}_{B(r)}^{-1}(y)$, for any $t\geq 0$, which proves \eqref{KKL4-E5.14}. Therefore,
\begin{equation}\label{KKL4-E5.16}
\cup_{\psi\in Jy}[\theta^*,\psi\lceil \subseteq \mathcal{F}_{B(r)}^{-1}(y),\quad \text{for any}\ y\in S(r).
\end{equation}

On the other hand, for $y\in S(r)$ and for given $\psi\in \mathcal{F}_{B(r)}^{-1}(y)$ with $\psi\ne \theta^*$, as in the proof of \eqref{KKL4-E5.14}, we can show that
\begin{equation}\label{KKL4-E5.17}
[\theta^*,\psi \lceil \subseteq \mathcal{F}_{B(r)}^{-1}(y),\quad \text{for any}\ \psi\in \mathcal{F}_{B(r)}^{-1}(y)\ \text{with}\ \psi\ne \theta^*.
\end{equation}
So, we may assume that $\|\psi\|_{X^*}=r$. It follows that $\|J^*\psi\|_X=r$, which implies $J^*\psi\in S(r)$. By $y\in S(r)$, $\psi\in \mathcal{F}_{B(r)}^{-1}(y)$ and $J^*\psi\in S(r)$, it follows that
$$r^2\geq \|\psi\|_{X^*}\|y\|_X\geq \langle \psi,y\rangle\geq \langle \psi,J^*\psi\rangle=\|\psi\|^2_{X^*} =r^2.$$
This implies $\|\psi\|_{X^*}=\|y\|_X=r$ and $\langle \psi,y\rangle=r^2 $ Hence $\psi\in Jy.$ We have established,
$$\psi\in [\theta^*,\psi\lceil \subseteq \cup_{\psi\in Jy}[\theta^*,\psi\lceil,$$
 implying
 \begin{equation}\label{KKL4-E5.18}
\mathcal{F}_{B(r)}^{-1}(y)\subseteq \cup_{\psi\in Jy}[\theta^*,\psi\lceil,\quad \text{for any}\ y\in S(r).
\end{equation}
By combining \eqref{KKL4-E5.16} and \eqref{KKL4-E5.18}, we complete the proof of (c).

(d) It follows at once from (c) under the additional hypothesis on $X$.
\end{proof}

The following result connects generalized dual cones with the notion of visions.
\begin{thr}\label{KKL4-T5.20} Let $X$ be a uniformly convex and uniformly smooth Banach space and let $K$ be a closed and convex cone in $X$ with vertex at $v$. Then,
\begin{equation}\label{KKL4-E5.19}
K_{\pi}^{\perp}=\pi_K^{-1}(v)=Jv+\mathcal{F}_K^{-1}(v).
\end{equation}
\end{thr}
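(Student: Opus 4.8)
The plan is to build on the identification already established in Theorem~\ref{KKL4-T4.3}(a), namely $K_{\pi}^{\perp}=\pi_K^{-1}(v)$, so that the only thing left to prove is the second equality, i.e.\ $K_{\pi}^{\perp}=Jv+\mathcal{F}_K^{-1}(v)$ (equivalently $\pi_K^{-1}(v)=Jv+\mathcal{F}_K^{-1}(v)$). For this I would work directly from the defining inequality \eqref{KKL4-E4.12} of the generalized dual cone and perform the change of variable $\phi=\psi-Jv$; the assertion reduces to showing that $\psi\in K_{\pi}^{\perp}$ if and only if $\phi\in\mathcal{F}_K^{-1}(v)$.

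First I would unwind the definition: by \eqref{KKL4-E4.12}, $\psi\in K_{\pi}^{\perp}$ means $\langle\psi-Jv,v-z\rangle\ge 0$ for all $z\in K$, i.e., writing $\phi:=\psi-Jv$, that $\langle\phi,v\rangle\ge\langle\phi,z\rangle$ for every $z\in K$. Since $K$ is a cone with vertex $v$, we have $v\in K$, so this family of inequalities says precisely that the linear functional $\langle\phi,\cdot\rangle$ attains its supremum over $K$ at the point $v$, that is $\langle\phi,v\rangle=\sup_{z\in K}\langle\phi,z\rangle$. Comparing with Definition~\ref{KKL4-D5.10}(a) taken with $C=K$ and $y=v$, this is exactly the statement $\phi\in\mathcal{F}_K^{-1}(v)$. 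Conversely, membership $\phi\in\mathcal{F}_K^{-1}(v)$ returns $\langle\phi,v-z\rangle\ge 0$ for all $z\in K$, hence $\psi=Jv+\phi\in K_{\pi}^{\perp}$. Therefore $K_{\pi}^{\perp}=\{Jv+\phi:\ \phi\in\mathcal{F}_K^{-1}(v)\}=Jv+\mathcal{F}_K^{-1}(v)$, which combined with Theorem~\ref{KKL4-T4.3}(a) gives \eqref{KKL4-E5.19}.

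The argument is essentially a bookkeeping of definitions, so there is no serious obstacle; the only point that deserves a moment's care is the equivalence between ``$\langle\phi,v\rangle\ge\langle\phi,z\rangle$ for all $z\in K$'' and ``$\langle\phi,v\rangle=\sup_{z\in K}\langle\phi,z\rangle$'', which relies precisely on $v$ itself belonging to $K$ (so the supremum is finite and attained at $v$). As a consistency check with the rest of the section, one may also note that this identity exhibits the closed and convex cone $\mathcal{F}_K^{-1}(v)$ (with vertex at $\theta^*$, by Proposition~\ref{KKL4-P5.12}) as the translate of the closed and convex cone $K_{\pi}^{\perp}$ by $-Jv$, in agreement with the fact, recorded in Theorem~\ref{KKL4-T4.3}(b), that $K_{\pi}^{\perp}$ has vertex at $Jv$.
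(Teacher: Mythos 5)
Your proposal is correct and follows essentially the same route as the paper: both reduce to the second equality via Theorem~\ref{KKL4-T4.3}(a) and then verify, by the change of variable $\phi=\psi-Jv$, that membership in $K_{\pi}^{\perp}$ is equivalent to $\phi\in\mathcal{F}_K^{-1}(v)$, using that $v\in K$ so the supremum in the face definition is attained at $v$. Your explicit remark about why the inequality family is equivalent to the supremum condition is a small clarification the paper leaves implicit, but the argument is the same.
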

\begin{proof} By Theorem~\ref{KKL4-T4.3}, we have  $K_{\pi}^{\perp}=\pi_{K}^{-1}(v)$. Thus, we only need to prove the second equality in \eqref{KKL4-E5.19}.  For any $\psi\in X^*$, we have
\begin{align*}
\psi\in \mathcal{F}_K^{-1}(v)\ &\Leftrightarrow\ \langle \psi,v-x\rangle\geq 0,\ \text{for all}\ x\in K,\\
&\Leftrightarrow\ \langle \psi+Jv-Jv,v-x\rangle\geq 0,\ \text{for all}\ x\in K\\
&\Leftrightarrow\ \psi+Jv\in K_{\pi}^{\perp},
\end{align*}
and the proof is complete.
\end{proof}
\begin{rem} Equation \eqref{KKL4-E5.19} reexamines the following results:
\begin{description}
\item[(i)] $K_{\pi}^{\perp}$ is a closed and convex cone with vertex at $Jv$ in $X^*$ (Theorem~\ref{KKL4-T4.3}).
\item[(ii)] $\mathcal{F}_K^{-1}(v)$ is a closed and convex cone with vertex at $\theta^*$ in $X^*$ (Proposition~\ref{KKL4-P5.13}).
\end{description}
\end{rem}

%\begin{acknowledgements}
%  The research of Akhtar Khan is supported by the National Science Foundation under Award No. 1720067.  Miguel Sama is supported by Ministerio de Ciencia, Innovaci\'{o}n y Universidades (MCIU), Agencia Estatal de Investigaci\'{o}n (AEI) %(Spain) and Fondo Europeo de Desarrollo Regional (FEDER) under project PGC2018-096899-B-I00 (MCIU/AEI/FEDER, UE) and grant number
%2020-MAT11 (ETSI Industriales, UNED).
%\end{acknowledgements}
\bibliographystyle{spmpsci_unsrt}
\bibliography{BIB-PM}
\end{document}